\title{A Stochastic Derivative Free Optimization Method with Momentum}
\newcommand{\Exp}{\mathbf{E}}
\newcommand{\R}{\mathbb{R}}
\newcommand{\eqdef}{\stackrel{\text{def}}{=}}
\def\<#1,#2>{\left\langle #1,#2\right\rangle}
\def\epsilon{\varepsilon}
\newtheorem{theorem}{Theorem}[section]
\newtheorem{assumption}{Assumption}[section]
\newtheorem{lemma}{Lemma}[section]
\newtheorem{lem}{Lemma}[section]
\theoremstyle{plain}
\newcommand{\argmin}{\mathop{\arg\!\min}}
\newcommand{\eduard}[1]{\todo[inline]{{\textbf{Eduard:} \emph{#1}}}}
\newcommand{\cD}{{\cal D}}
\newcommand{\EE}{\mathbf{E}}
\newcommand{\PP}{\mathbf{P}}
\author{Eduard Gorbunov\thanks{The research of Eduard Gorbunov was supported by RFBR, project number 18-31-20005 mol{\_}a{\_}ved}\\
   MIPT, Russia and IITP RAS, Russia and RANEPA, Russia \\
   \texttt{eduard.gorbunov@phystech.edu} \\
     \And
    Adel Bibi \\
    KAUST, Saudi Arabia \\
 \texttt{adel.bibi@kaust.edu.sa} \\
 \And
 Ozan Sener \\
 Intel Labs \\
 \texttt{ozan.sener@intel.com} \\
 \And
 El Houcine Bergou \\
 KAUST, Saudi Arabia and MaIAGE, INRA, France \\
 \texttt{elhoucine.bergou@inra.fr} \\
 \And
 Peter Richt\'arik \\
 KAUST, Saudi Arabia and MIPT, Russia\\
 \texttt{peter.richtarik@kaust.edu.sa} \\
}
\newcommand{\bluetext}{} 
\begin{document}

\maketitle

\begin{abstract}
    We consider the problem of unconstrained minimization of a smooth objective
function in $\mathbb{R}^d$
in setting where only function evaluations are possible. We propose and analyze stochastic zeroth-order method with heavy ball momentum. In particular, we propose, {\tt SMTP}, a momentum version of the stochastic three-point method ({\tt STP}) \cite{Bergou_2018}. We show new complexity results for non-convex, convex and strongly convex functions.  
We test our method on a collection of learning to continuous control tasks on several MuJoCo \cite{Todorov_2012} environments with varying
difficulty and compare against {\tt STP}, other state-of-the-art derivative-free optimization algorithms and against policy gradient methods. {\tt SMTP} significantly outperforms {\tt STP} and all other methods that we considered in our numerical experiments. Our second contribution is {\tt SMTP} with importance sampling which we call {\tt SMTP{\_}IS}. We provide convergence analysis of this method for non-convex, convex and strongly convex objectives.

\end{abstract}
\section{Introduction}
\label{introduction}

In this paper, we consider the following minimization problem
\begin{equation}\label{eq:problem}
	\min\limits_{x\in\R^d} f(x),
\end{equation}
where $f : \mathbb{R}^d \rightarrow \mathbb{R}$ is "smooth" but not necessarily a convex function in a Derivative-Free Optimization (DFO) setting where only function evaluations are possible. The function $f$ is bounded from below by $f(x^*)$ where  $x^*$ is a minimizer. Lastly and throughout the paper, we assume that $f$ is $L$-smooth.

\noindent \textbf{DFO.} In DFO setting \cite{Conn_2009, Kolda_2003}, the derivatives of the objective function $f$ are not accessible. That is they are either impractical to evaluate, noisy (function $f$ is noisy) \citep{chen2015stochastic} or they are simply not available at all. In standard applications of DFO, evaluations of $f$ are only accessible through simulations of black-box engine or software as in reinforcement learning and continuous control environments \cite{Todorov_2012}. This setting of optimization problems appears also  in applications from computational medicine \cite{Marsden_2008} and fluid dynamics \cite{Allaire_2001, Haslinger_2003, Mohammadi_2001} to localization \cite{Marsden_2004, Marsden_2007} and continuous control \cite{Mania_2018, Salimans_2017} to name a few.
 
The literature on DFO for solving (\ref{eq:problem}) is long and rich. The first approaches were based on deterministic direct search (DDS) and they span half a century of work \cite{Hooke_1961,1979positive,torczon1997convergence}. However, for DDS methods complexity bounds have only been established recently by the work of Vicente and coauthors \cite{vicente2013worst,dodangeh2016worst}. In particular, the work of Vicente \cite{vicente2013worst} showed the first complexity results on non-convex $f$ and the results were extended to better complexities when $f$ is convex  \cite{dodangeh2016worst}. However, there have been several variants of DDS, including randomized approaches \cite{Matyas_1965, Karmanov_1974a, Karmanov_1974b, Baba_1981, Dorea_1983, Sarma_1990}. Only very recently, complexity bounds have also been derived for randomized methods \cite{Diniz_2008,Stich_2011,ghadimi2013stochastic, ghadimi2016mini,Gratton_2015}. For instance, the work of \cite{Diniz_2008,Gratton_2015} imposes a decrease condition on whether to accept or reject a step of a set of random directions. Moreover, \cite{Nesterov_Spokoiny_2017} derived new complexity bounds when the random directions are normally distributed vectors for both smooth and non-smooth $f$. They proposed both accelerated and non-accelerated zero-order (ZO)  methods. Accelerated derivative-free methods in the case of inexact oracle information was proposed in \cite{dvurechensky2017randomized}. An extension of \cite{Nesterov_Spokoiny_2017} for non-Euclidean proximal setup was proposed by \cite{Dvurechensky_Gasnikov_Gorbunov_2018} for the smooth stochastic convex optimization with inexact oracle. {In \cite{stich2014convex,stich2014low} authors also consider acceleration of ZO methods and, in particular, develop the method called {\tt SARP}, proved that its convergence rate is not worse than for non-accelerated ZO methods and showed that in some cases it works even better.}
 
More recently and closely related to our work, \cite{Bergou_2018} proposed a new randomized direct search method called {\em Stochastic Three Points} (\texttt{STP}). At each iteration $k$ \texttt{STP} generates a random search direction $s_k$ according to a certain probability law and compares the objective function at three points: current iterate $x_k$, a point in the direction of $s_k$ and a point in the direction of $-s_k$ with a certain step size $\alpha_k$. The method then chooses the best of these three points as the new iterate:
 \[x_{k+1} = \arg\min \{f(x_k),  f(x_k + \alpha_k s_k), f(x_k - \alpha_k s_k)\}.\]
 
 The key properties of {\tt STP} are its simplicity, generality and practicality. Indeed, the update rule for {\tt STP} makes it extremely simple to implement, the proofs of convergence results for {\tt STP} {\bluetext are short and clear} and assumptions on random search directions cover a lot of strategies of choosing decent direction and even some of first-order methods fit the {\tt STP} scheme which makes it a very flexible in comparison with other zeroth-order methods (e.g.\ two-point evaluations methods like in \cite{Nesterov_Spokoiny_2017}, \cite{ghadimi2013stochastic}, \cite{ghadimi2016mini}, \cite{Dvurechensky_Gasnikov_Gorbunov_2018} that try to approximate directional derivatives along random direction at each iteration). Motivated by these properties of {\tt STP} we focus on further developing of this method.

\noindent \textbf{Momentum.}
Heavy ball momentum\footnote{We will refer to this as momentum.} is a special technique introduced by Polyak in 1964 \cite{polyak1964some} to get faster convergence to the optimum for the first-order methods. In the original paper, Polyak proved that his method converges \textit{locally} with  $O\left(\sqrt{\nicefrac{L}{\mu}}\log\nicefrac{1}{\varepsilon}\right)$ rate for twice continuously differentiable $\mu$-strongly convex and $L$-smooth functions. Despite the long history of this approach, there is still an open question whether heavy ball method converges to the optimum \textit{globally} with accelerated rate when the objective function is twice continuous differentiable, $L$-smooth and $\mu$-strongly convex. For this class of functions, only non-accelerated global convergence was proved \cite{ghadimi2015global} and for the special case of quadratic strongly convex and $L$-smooth functions Lessard et. al. \cite{lessard2016analysis} recently proved asymptotic accelerated global convergence. However, heavy ball method performs well in practice and, therefore, is widely used. One can find more detailed survey of the literature about heavy ball momentum in \cite{loizou2017momentum}.

\noindent \textbf{Importance Sampling.} Importance sampling has been celebrated and extensively studied in stochastic gradient based methods \cite{zhao2015stochastic} or in coordinate based methods \cite{richtarik2016optimal}. Only very recently, \cite{bibi19STPIS} proposed, {\tt STP{\_}IS}, the first DFO algorithm with importance sampling. In particular, under coordinate-wise smooth function, they show that sampling coordinate directions, can be generalized to arbitrary directions, with probabilities proportional to the function coordinate smoothness constants, improves the leading constant by the same factor typically gained in gradient based methods.

\textbf{Contributions.} Our contributions can be summarized into three folds.
\begin{itemize}
    \item \textbf{First ZO method with heavy ball momentum.} Motivated by practical effectiveness of first-order momentum heavy ball method, we introduce momentum into {\tt STP} method and propose new DFO algorithm with heavy ball momentum ({\tt SMTP}). We summarized the method in Algorithm \ref{alg:SMTP}, with theoretical guarantees for non-convex, convex and strongly convex functions under generic sampling directions $\cD$. We emphasize that the {\tt SMTP} with momentum is not a straightforward generalization of {\tt STP} and Polyak's method and requires insights from virtual iterates analysis from \cite{yang2016unified}.
    
    To the best of our knowledge it is the first analysis of derivative-free method with heavy ball momentum, i.e.\ we show that the same momentum trick that works for the first order method could be applied for zeroth-order methods as well. 
    \item \textbf{First ZO method with both heavy ball momentum and importance sampling.} In order to get more gain from momentum in the case when the sampling directions are coordinate directions and the objective function is coordinate-wise $L$-smooth (see Assumption \ref{as_sup:coord_wise_L_smoothness}), we consider importance sampling to the above method. In fact, we  propose the first zeroth-order momentum method with importance sampling ({\tt SMTP{\_}{IS}}) summarized in Algorithm \ref{alg:SMTP_IS} with theoretical guarantees for non-convex, convex and strongly convex functions. The details and proofs are left for Section \ref{sec:smtp_is_} and  Appendix \ref{sec:smtp_is}.
    \item \textbf{Practicality.} We conduct extensive experiments on continuous control tasks from the MuJoCo suite \cite{Todorov_2012} following recent success of DFO compared to model-free reinforcement learning \cite{Mania_2018,Salimans_2017}. We achieve with {\tt SMTP{\_}IS} the state-of-the-art results on across all tested environments on the continuous control outperforming DFO \cite{Mania_2018} and policy gradient methods \cite{schulman2015trust,Rajeswaran_2017}.
\end{itemize}



\begin{algorithm}[t]
   \caption{{\tt SMTP}: Stochastic Momentum Three Points}
   \label{alg:SMTP}
\begin{algorithmic}[1]
   \Require learning rates $\{\gamma^k\}_{k\geq 0}$, starting point $x^0 \in \R^d$, $\cD$~--- distribution on $\R^d$, $0\le \beta < 1$~--- momentum parameter
	\State Set $v^{-1} = 0$ and $z^0 = x^0$  
    \For{$k=0,1,\dotsc$}
       \State Sample $s^k \sim \cD$
       \State Let $v_+^k = \beta v^{k-1} + s^k$ and $v_-^k = \beta v^{k-1} - s^k$
       \State Let $x_+^{k+1} = x^{k} - \gamma^k v_+^k$ and $x_-^{k+1} = x^{k} - \gamma^k v_-^k$
       \State Let $z_+^{k+1} = x_+^{k+1} - \frac{\gamma^k\beta}{1-\beta}v_+^k$ and $z_-^{k+1} = x_-^{k+1} - \frac{\gamma^k\beta}{1-\beta}v_-^k$
       \State Set $z^{k+1} = \arg\min\left\{f(z^k), f(z_+^{k+1}), f(z_-^{k+1})\right\}$
       \State Set $x^{k+1} = \begin{cases}x_+^{k+1},\quad \text{if } z^{k+1} = z_+^{k+1}\\ x_-^{k+1},\quad \text{if } z^{k+1} = z_-^{k+1}\\x^{k},\quad \text{if } z^{k+1} = z^{k}\end{cases}$ and $v^{k+1} = \begin{cases}v_+^{k+1},\quad \text{if } z^{k+1} = z_+^{k+1}\\ v_-^{k+1},\quad \text{if } z^{k+1} = z_-^{k+1}\\v^{k},\quad \text{if } z^{k+1} = z^{k}\end{cases}$
   \EndFor
\end{algorithmic}
\end{algorithm}

\begin{table*}[t]
\centering
\small
\renewcommand{\arraystretch}{1.25}{
\begin{tabular}{c|c|c|c|c|c}
\toprule
Assumptions on $f$     & \begin{tabular}{@{}c@{}}{\tt SMTP}\\ Complexity \end{tabular} & Theorem & \begin{tabular}{@{}c@{}}Importance\\ Sampling \end{tabular} & \begin{tabular}{@{}c@{}}{\tt SMTP{\_}IS}\\Complexity \end{tabular} & Theorem  \\ 
  \midrule
None &  $\frac{2r_0L\gamma_\cD}{\mu_\cD^2 \epsilon^2}$ & \ref{thm:non_cvx} &$p_i = \frac{L_i}{\sum_{i=1}^d L_i}$ & $\frac{2 r_0 \textcolor{red}{ d\sum_{i=1}^d L_i}}{\epsilon^2}$ 
&   \ref{thm:non_cvx_is}\\

Convex, $R_0 < \infty$  & $\frac{1}{\varepsilon}\frac{L\gamma_\cD R_0^2}{\mu_\cD^2}\ln\left(\frac{2r_0}{\varepsilon}\right)$  & \ref{thm:cvx_constant_stepsize} & $p_i = \frac{L_i}{\sum_{i=1}^d L_i}$ & $\frac{R_0^2 \textcolor{red} { d\sum_{i=1}^d L_i}}{\epsilon} \ln\left(\frac{2r_0}{\varepsilon}\right)$  &   \ref{thm:cvx_constant_stepsize_is}\\

$\mu$-strongly convex & $ \frac{L}{\mu \mu_\cD^2}\ln\left(\frac{2r_0}{\varepsilon}\right)$ & \ref{thm:str_cvx_sol_free_stepsizes} & $p_i = \frac{L_i}{\sum_{i=1}^d L_i}$ &  $\frac{\textcolor{red}{\sum_{i=1}^d L_i}}{\mu}\ln\left(\frac{2r_0}{\varepsilon}\right) $& \ref{thm:str_cvx_sol_free_stepsizes_is} \\
\bottomrule
\end{tabular}
}
\vspace{-3pt}
\caption{Summary of the new derived complexity results of {\tt SMTP} and {\tt SMTP{\_}IS}. The complexities for {\tt SMTP} are under a generic sampling distribution $\cD$ satisfying Assumption \ref{ass:stp_general_assumption} while for {\tt SMTP{\_}IS} are under an arbitrary discrete sampling from a set of coordinate directions following \cite{bibi19STPIS} where we propose an importance sampling that improves the leading constant marked in red. Note that $r_0 = f(x_0) - f(x_*)$ and that all assumptions listed are in addition to Assumption \ref{ass:l_smoothness}. Complexity means number of iterations in order to guarantee $\EE\|\nabla f(\overline{z}^K)\|_{\cD} \le \varepsilon$ for the non-convex case, $\EE\left[f(z^K) - f(x^*)\right] \le \varepsilon$ for convex and strongly convex cases. $R_0 < \infty$ is the radius in $\|\cdot\|_{\cD}^*$-norm of a bounded level set where the exact definition is given in Assumption \ref{as:bounded_level_set}. We notice that for {\tt SMTP{\_}IS} $\|\cdot\|_{\cD} = \|\cdot\|_{1}$ and $\|\cdot\|_{\cD}^* = \|\cdot\|_\infty$ in non-convex and convex cases and $\|\cdot\|_{\cD} = \|\cdot\|_2$ in the strongly convex case.}
\label{tab:sumcompl}
\vspace{-15pt}
\end{table*}

We provide more detailed comparison of {\tt SMTP} and {\tt SMTP{\_}IS} in Section~\ref{sec_sup:comparison_with_smtp} of the Appendix.

\section{Notation and Definitions}\label{sec:notation}
We use $\|\cdot\|_{p}$ to define $\ell_p$-norm of the vector $x\in\R^d$: $\|x\|_p \eqdef \left(\sum_{i=1}^d |x_i|^p\right)^{\nicefrac{1}{p}}$ for $p \ge 1$ and $\|x\|_\infty \eqdef \max_{i\in[d]}|x_i|$ where $x_i$ is the $i$-th component of vector $x$, $[d] = \{1,2,\ldots,d\}$. Operator $\EE[\cdot]$ denotes mathematical expectation with respect to all randomness and $\EE_{s\sim\cD}[\cdot]$ denotes conditional expectation w.r.t. randomness coming from random vector $s$ which is sampled from probability distribution $\cD$ on $\R^d$. To denote standard inner product of two vectors $x,y\in\R^d$ we use $\langle x,y\rangle \eqdef \sum_{i=1}^d x_i y_i$, $e_i$ denotes $i$-th coordinate vector from standard basis in $\R^d$, i.e.\ $x = \sum_{i=1}^d x_i e_i$. {\bluetext We use $\|\cdot\|^{*}$ to define the conjugate norm for the norm $\|\cdot\|$: $\|x\|^{*} \eqdef \max\left\{\< a, x>\mid a\in \R^d, \|a\| \le 1\right\}$}.

As we mention in the introduction we assume throughout the paper\footnote{We will use thinner assumption in Section~\ref{sec:smtp_is_}.} that the objective function $f$ is $L$-smooth.
\begin{assumption}\label{ass:l_smoothness}($L$-smoothness)
	We say that $f$ is \textit{$L$-smooth} if
	\begin{equation}\label{eq:L_smoothness}
		\|\nabla f(x) - \nabla f(y)\|_2 \le L\|x-y\|_2 \quad \forall x,y\in\R^d.
	\end{equation}
\end{assumption} 
From this definition one can obtain
\begin{equation}\label{eq:quadratic_upper_bound}
	|f(y) - f(x) - \langle\nabla f(x), y-x \rangle | \le \frac{L}{2}\|y-x\|_2^2,\quad \forall x,y\in\R^d,
\end{equation}
and if additionally $f$ is convex, i.e.\ $f(y) \ge f(x) + \langle\nabla f(x), y-x \rangle$, we have
\begin{equation}\label{eq:gradient_upper_bound}
	\|\nabla f(x)\|_2^2 \le 2L(f(x) - f(x^*)), \quad \forall x\in\R^d.
\end{equation}

\section{ Stochastic Momentum Three Points ({\tt SMTP})} \label{sec:analysis}
Our analysis of {\tt SMTP} is based on the following key assumption.
\begin{assumption}\label{ass:stp_general_assumption}
	The probability distribution $\cD$ on $\R^d$ satisfies the following properties:
	\begin{enumerate}
	\item The quantity $\gamma_\cD \eqdef \EE_{s\sim\cD}\|s\|_2^2$ is {\bluetext finite}.
	\item There is a constant $\mu_\cD > 0$ {\bluetext for a norm $\|\cdot\|_\cD$ in} $\R^d$ such that for all $g\in\R^d$
	\begin{equation}\label{eq:inner_product_lower_bound}
		\EE_{s\sim\cD}|\langle g,s\rangle| \ge \mu_\cD\|g\|_{\cD}.
	\end{equation}
	\end{enumerate}
\end{assumption}

Some examples of distributions that meet above assumption are described in Lemma~3.4 from \cite{Bergou_2018}. For convenience we provide the statement of the lemma in the Appendix (see Lemma~\ref{lem_sup:aux_lemma}).

Recall that one possible view on {\tt STP} \cite{Bergou_2018} is as following. If we substitute gradient $\nabla f(x^k)$ in the update rule for the gradient descent $x^{k+1} = x^k - \gamma^k\nabla f(x^k)$ by $\pm s^k$ where $s^k$ is sampled from distribution $\cD$ satisfied Assumption~\ref{ass:stp_general_assumption} and then select $x^{k+1}$ as the best point in terms of functional value among $x^k, x^k - \gamma^ks^k, x^k + \gamma^ks^k$  we will get exactly {\tt STP} method. However, gradient descent is not the best algorithm to solve unconstrained smooth minimization problems and the natural idea is to try to perform the same substitution-trick with more efficient first-order methods than gradient descent.

We put our attention on Polyak's heavy ball method {\bluetext where the update rule} could be written in the following form:
\begin{equation}\label{eq:polyak_update}
    v^k = \beta v^{k-1} + \nabla f(x^k),\quad x^{k+1} = x^k - \gamma^k v^k.
\end{equation}
As in {\tt STP}, we substitute $\nabla f(x^k)$ by $\pm s^k$ and consider new sequences $\{v_+^k\}_{k\ge0}$ and $\{v_-^k\}_{k\ge 0}$ defined in the Algorithm~\ref{alg:SMTP}. However, it is not straightforward how to choose next $x^{k+1}$ and $v^k$ and the virtual iterates analysis \cite{yang2016unified} hints the update rule. We consider new iterates $z_+^{k+1} = x_+^{k+1} - \frac{\gamma^k\beta}{1-\beta}v_+^k$ and $z_-^{k+1} = x_-^{k+1} - \frac{\gamma^k\beta}{1-\beta}v_-^k$ and define $z^{k+1}$ as $\arg\min\left\{f(z^k), f(z_+^{k+1}), f(z_-^{k+1})\right\}$. Next we update $x^{k+1}$ and $v^k$ in order to have the same relationship between $z^{k+1},x^{k+1}$ and $v^k$ as between $z_+^{k+1},x_+^{k+1}$ and $v_+^k$ and $z_-^{k+1},x_-^{k+1}$ and $v_-^k$. Such scheme allows easily apply virtual iterates analysis and and generalize Key Lemma from \cite{Bergou_2018} which is the main tool in the analysis of {\tt STP}.

By definition of $z^{k+1}$, we get that the sequence $\{f(z^k)\}_{k\ge 0}$ is monotone:
\begin{equation}\label{eq:monotonicity}
	f(z^{k+1}) \le f(z^k) \qquad \forall k\ge 0.
\end{equation}

Now, we establish the key result which will be used to prove the main complexity results and remaining theorems in this section. 

\begin{lemma}\label{lem:key_lemma}
	Assume that $f$ is $L$-smooth and $\cD$ satisfies Assumption~\ref{ass:stp_general_assumption}. Then for the iterates of {\tt SMTP} the following inequalities hold:
	\begin{equation}\label{eq:key_lemma_without_expectation}
		f(z^{k+1}) \le f(z^k) - \frac{\gamma^k}{1-\beta}|\langle\nabla f(z^k),s^k\rangle| + \frac{L(\gamma^k)^2}{2(1-\beta)^2}\|s^k\|_2^2
	\end{equation}
	and
	\begin{equation}\label{eq:key_lemma}
		\EE_{s^k\sim\cD}\left[f(z^{k+1})\right] \le f(z^k) - \frac{\gamma^k\mu_\cD}{1-\beta}\|\nabla f(z^k)\|_\cD + \frac{L(\gamma^k)^2\gamma_\cD}{2(1-\beta)^2}.
	\end{equation}	 
\end{lemma}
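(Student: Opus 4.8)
The plan is to reduce the behaviour of {\tt SMTP} to an {\tt STP}-style one-step descent inequality for the \emph{virtual iterates} $z^k$, mirroring the way Polyak's heavy ball method is analyzed via shifted iterates \cite{yang2016unified}. The heart of the argument is a single algebraic identity. From Line~6 of Algorithm~\ref{alg:SMTP},
\[
z_+^{k+1} \;=\; x_+^{k+1} - \frac{\gamma^k\beta}{1-\beta}v_+^k \;=\; x^k - \gamma^k v_+^k - \frac{\gamma^k\beta}{1-\beta}v_+^k \;=\; x^k - \frac{\gamma^k}{1-\beta}v_+^k,
\]
and symmetrically $z_-^{k+1} = x^k - \frac{\gamma^k}{1-\beta}v_-^k$. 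I would then show by induction on $k$ (base case $v^{-1}=0$, $z^0=x^0$) that the update of Line~8 preserves the invariant $z^k = x^k - \frac{\gamma^k\beta}{1-\beta}v^{k-1}$, so that subtracting it from the two displays above and using $v_\pm^k - \beta v^{k-1} = \pm s^k$ gives the "virtual gradient step" identities $z_+^{k+1}-z^k = -\frac{\gamma^k}{1-\beta}s^k$ and $z_-^{k+1}-z^k = \frac{\gamma^k}{1-\beta}s^k$. Checking this invariant across all three branches of the $\arg\min$-based update of $(x^{k+1},v^{k})$ --- i.e.\ that $z^{k+1}$ sits relative to the chosen $(x^{k+1},v^k)$ exactly as $z_\pm^{k+1}$ sits relative to $(x_\pm^{k+1},v_\pm^k)$ --- is the step I expect to be the most delicate, and is where the particular form of the updates (and, if needed, a constant-stepsize restriction inside the recursion) actually matters.

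Given the identities, the rest is short. Applying the descent consequence \eqref{eq:quadratic_upper_bound} of $L$-smoothness at the point $z^k$ with increments $z_\pm^{k+1}-z^k = \mp\frac{\gamma^k}{1-\beta}s^k$ yields
\[
f(z_\pm^{k+1}) \;\le\; f(z^k) \mp \frac{\gamma^k}{1-\beta}\langle \nabla f(z^k), s^k\rangle + \frac{L(\gamma^k)^2}{2(1-\beta)^2}\|s^k\|_2^2 .
\]
Because $z^{k+1}$ is the $\arg\min$ of $f$ over $\{z^k,z_+^{k+1},z_-^{k+1}\}$ we have $f(z^{k+1})\le\min\{f(z_+^{k+1}),f(z_-^{k+1})\}$, and since the two bounds above differ only in the sign of the linear term, taking the smaller one turns $\mp\langle\nabla f(z^k),s^k\rangle$ into $-|\langle\nabla f(z^k),s^k\rangle|$; this is precisely \eqref{eq:key_lemma_without_expectation}.

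Finally I would take the conditional expectation $\EE_{s^k\sim\cD}$ of \eqref{eq:key_lemma_without_expectation}. Since $z^k$ (hence $\nabla f(z^k)$ and $f(z^k)$) is determined by the history and independent of $s^k$, Assumption~\ref{ass:stp_general_assumption} applies conditionally: the inequality $\EE_{s\sim\cD}|\langle g,s\rangle|\ge\mu_\cD\|g\|_\cD$ with $g=\nabla f(z^k)$ bounds the negatively weighted middle term, and $\EE_{s\sim\cD}\|s\|_2^2=\gamma_\cD$ bounds the last term, which gives \eqref{eq:key_lemma}. No ingredients beyond \eqref{eq:quadratic_upper_bound} and the two parts of Assumption~\ref{ass:stp_general_assumption} are used; the only real content is the virtual-iterate bookkeeping of the first paragraph.
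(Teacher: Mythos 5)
Your proposal is correct and follows essentially the same route as the paper's own proof: establish the invariant $z^k = x^k - \frac{\gamma^k\beta}{1-\beta}v^{k-1}$ by induction, deduce $z_\pm^{k+1} = z^k \mp \frac{\gamma^k}{1-\beta}s^k$, apply the quadratic upper bound \eqref{eq:quadratic_upper_bound}, take the minimum to produce the absolute value, and then take conditional expectation using Assumption~\ref{ass:stp_general_assumption}. The bookkeeping step you flag as delicate (the invariant surviving the three-branch update, and its implicit reliance on how $\gamma^k$ enters the recursion) is indeed the only nontrivial point, and the paper treats it exactly as you propose.
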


\subsection{Non-Convex Case}\label{sec:non_cvx}

In this section, we show our complexity results for Algorithm \ref{alg:SMTP} in the case when f is allowed to be non-convex. In particular, we show that {\tt SMTP} in Algorithm \ref{alg:SMTP} guarantees complexity bounds with the same order as classical bounds, i.e. $1/\sqrt{K}$ where $K$ is the number of iterations, in the literature. We notice that query complexity (i.e.\ number of oracle calls) of {\tt SMTP} coincides with its iteration complexity up to numerical constant factor. For clarity and completeness, proofs are left for the appendix.


\begin{theorem}\label{thm:non_cvx}
	Let Assumptions \ref{ass:l_smoothness} and \ref{ass:stp_general_assumption} be satisfied. Let {\tt SMTP} with $\gamma^k \equiv \gamma > 0$ produce points $\{z^0,z^1,\ldots, z^{K-1}\}$ and $\overline{z}^K$ is chosen uniformly at random among them. Then
	\begin{equation}\label{eq:non_cvx}
	\EE\left[\|\nabla f(\overline{z}^K)\|_\cD\right] \le \frac{(1-\beta)(f(x^0) - f(x^*))}{K\gamma\mu_\cD} + \frac{L\gamma\gamma_\cD}{2\mu_\cD(1-\beta)}.
	\end{equation}
	Moreover, if we choose $\gamma = \frac{\gamma_0}{\sqrt{K}}$ the complexity \eqref{eq:non_cvx} reduces to
		\begin{equation}\label{eq:cor_non_cvx}
		\EE\left[\|\nabla f(\overline{z}^K)\|_\cD\right] \le \frac{1}{\sqrt{K}}\left(\frac{(1-\beta)(f(z^0) - f(x^*))}{\gamma_0\mu_\cD} + \frac{L\gamma_0\gamma_\cD}{2\mu_\cD(1-\beta)}\right).
	\end{equation}
    Then $\gamma_0 = \sqrt{\frac{2(1-\beta)^2(f(x^0) - f(x^*))}{L\gamma_\cD}}$ minimizes the right-hand side of {\bluetext \eqref{eq:cor_non_cvx}} and for this choice we have
	\begin{equation}\label{eq_sup:cor_non_cvx_optimal_gamma}
		\EE\left[\|\nabla f(\overline{z}^K)\|_\cD\right] \le \frac{\sqrt{2\left(f(x^0)-f(x^*)\right)L\gamma_\cD}}{\mu_\cD\sqrt{K}}.
	\end{equation}
\end{theorem}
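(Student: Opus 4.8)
The plan is to turn the per-step inequality \eqref{eq:key_lemma} from the Key Lemma into a telescoping sum, and then exploit the uniform random choice of $\overline{z}^K$. First I would specialize \eqref{eq:key_lemma} to the constant stepsize $\gamma^k \equiv \gamma$ and rearrange it to isolate the gradient-norm term:
\begin{equation*}
	\frac{\gamma\mu_\cD}{1-\beta}\|\nabla f(z^k)\|_\cD \le f(z^k) - \EE_{s^k\sim\cD}\left[f(z^{k+1})\right] + \frac{L\gamma^2\gamma_\cD}{2(1-\beta)^2}.
\end{equation*}
Taking full expectation (using the tower property, since $z^k$ is determined by $s^0,\dots,s^{k-1}$) and summing over $k = 0,1,\dots,K-1$, the right-hand side telescopes and leaves $\EE[f(z^0)] - \EE[f(z^K)] + \frac{KL\gamma^2\gamma_\cD}{2(1-\beta)^2}$. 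Here I would use $z^0 = x^0$ (from the initialization in Algorithm~\ref{alg:SMTP}) and the lower bound $f(z^K) \ge f(x^*)$, so that the telescoped difference is at most $f(x^0) - f(x^*)$.

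Next I would divide through by $K\gamma\mu_\cD/(1-\beta)$, and observe that since $\overline{z}^K$ is chosen uniformly at random from $\{z^0,\dots,z^{K-1}\}$ and independently of the trajectory, $\EE\left[\|\nabla f(\overline{z}^K)\|_\cD\right] = \frac{1}{K}\sum_{k=0}^{K-1}\EE\left[\|\nabla f(z^k)\|_\cD\right]$. This directly yields \eqref{eq:non_cvx}. Substituting $\gamma = \gamma_0/\sqrt{K}$ and simplifying (the first term picks up $\sqrt{K}/K = 1/\sqrt{K}$, the second picks up $1/\sqrt{K}$ directly) gives \eqref{eq:cor_non_cvx}.

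Finally, the bound \eqref{eq:cor_non_cvx} has the form $\tfrac{1}{\sqrt K}\left(\tfrac{A}{\gamma_0} + B\gamma_0\right)$ with $A = \tfrac{(1-\beta)(f(x^0)-f(x^*))}{\mu_\cD}$ and $B = \tfrac{L\gamma_\cD}{2\mu_\cD(1-\beta)}$; this is minimized at $\gamma_0 = \sqrt{A/B} = \sqrt{\tfrac{2(1-\beta)^2(f(x^0)-f(x^*))}{L\gamma_\cD}}$ with optimal value $2\sqrt{AB} = \tfrac{\sqrt{2(f(x^0)-f(x^*))L\gamma_\cD}}{\mu_\cD}$, giving \eqref{eq_sup:cor_non_cvx_optimal_gamma}. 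I do not anticipate a genuine obstacle here: the argument is the standard "telescope the descent inequality, then average" template. The only points requiring a little care are the bookkeeping of conditional versus full expectations when summing (making sure $\EE_{s^k\sim\cD}$ in the Key Lemma is promoted correctly to $\EE$), and confirming $z^0 = x^0$ so that the constant $r_0 = f(x^0)-f(x^*)$ appears as claimed rather than $f(z^0)-f(x^*)$ — though these coincide.
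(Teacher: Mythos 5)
Your proposal is correct and follows essentially the same route as the paper's proof: take full expectation of the Key Lemma bound \eqref{eq:key_lemma} with constant stepsize, rearrange, telescope the sum over $k=0,\dots,K-1$, bound $f(z^0)-\EE[f(z^K)]$ by $f(x^0)-f(x^*)$, and identify the average with $\EE[\|\nabla f(\overline{z}^K)\|_\cD]$. Your explicit minimization of $A/\gamma_0 + B\gamma_0$ correctly recovers the stated $\gamma_0$ and the bound \eqref{eq_sup:cor_non_cvx_optimal_gamma}, a step the paper simply labels as straightforward.
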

In other words, the above theorem states that {\tt SMTP} converges no worse than {\tt STP} for non-convex problems to the stationary point.  In the next sections we also show that theoretical convergence guarantees for {\tt SMTP} are not worse than for {\tt STP} for convex and strongly convex problems. However, in practice {\tt SMTP} significantly outperforms {\tt STP}. So, the relationship between {\tt SMTP} and {\tt STP} correlates with the known in the literature relationship between Polyak's heavy ball method and gradient descent.


\subsection{Convex Case}\label{sec:cvx}
In this section, we present our complexity results for Algorithm \ref{alg:SMTP} when $f$ is convex. In particular, we show that this method guarantees complexity bounds with the same order as classical bounds, i.e. $1/K$, in the literature. We will need the following additional assumption in the sequel.

\begin{assumption}\label{as:bounded_level_set}
	We assume that $f$ is convex, has a minimizer $x^*$ and has bounded level set at $x^0$:
	\begin{equation}\label{eq:bounded_level_set}
		R_0 \eqdef \max\left\{\|x-x^*\|_\cD^* \mid f(x) \le f(x^0)\right\} < +\infty,
	\end{equation}
	where $\|\xi\|_\cD^* \eqdef \max\left\{\langle\xi,x\rangle\mid \|x\|_\cD \le 1\right\}$ defines the dual norm to $\|\cdot\|_\cD$.
\end{assumption} 
From the above assumption and Cauchy-Schwartz inequality we get the following implication:
\[
	f(x) \le f(x_0) \Longrightarrow f(x) - f(x_*) \le \langle\nabla f(x), x-x^* \rangle \le \|\nabla f(x)\|_\cD\|x-x^*\|_\cD^* \le R_0\|\nabla f(x)\|_\cD,
\] 
which implies
\begin{equation}\label{eq:gradient_lower_bound}
	\|\nabla f(x)\|_\cD \ge \frac{f(x)-f(x^*)}{R_0}\qquad \forall x: f(x) \le f(x_0).
\end{equation}
\begin{theorem}[Constant stepsize]\label{thm:cvx_constant_stepsize}
	 Let Assumptions \ref{ass:l_smoothness}, \ref{ass:stp_general_assumption} and \ref{as:bounded_level_set} be satisfied. If we set $\gamma^k\equiv \gamma  < \frac{(1-\beta)R_0}{\mu_\cD}$, then for the iterates of {\tt SMTP} method the following inequality holds:
	\begin{equation}\label{eq:cvx_constant_stepsize}
	 \EE\left[f(z^k)-f(x^*)\right] \le \left(1 - \frac{\gamma\mu_\cD}{(1-\beta)R_0}\right)^k\left(f(x^0)-f(x^*)\right) + \frac{L\gamma\gamma_\cD R_0}{2(1-\beta)\mu_\cD}.
	\end{equation}
	If we choose $\gamma = \frac{\varepsilon(1-\beta)\mu_\cD}{L\gamma_\cD R_0}$ for some $0<\varepsilon\le \frac{L\gamma_\cD R_0^2}{\mu_\cD^2}$ and run {\tt SMTP} for $k = K$ iterations where
	\begin{equation}\label{eq:cvx_constant_stepsize_number_of_iterations}
		K = \frac{1}{\varepsilon}\frac{L\gamma_\cD R_0^2}{\mu_\cD^2}\ln\left(\frac{2(f(x^0) - f(x^*))}{\varepsilon}\right),
	\end{equation}
	then we will get $\EE\left[f(z^K)\right] - f(x^*) \le \varepsilon$.
\end{theorem}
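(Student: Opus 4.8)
The plan is to turn the descent estimate \eqref{eq:key_lemma} into a linear contraction for the expected suboptimality by invoking the level-set gradient lower bound \eqref{eq:gradient_lower_bound}, and then to unroll the recursion. The one delicate point is that \eqref{eq:gradient_lower_bound} is only valid at points of the level set $\{x : f(x)\le f(x^0)\}$, so I first have to argue that every iterate stays there. This is immediate from the monotonicity \eqref{eq:monotonicity}: along every sample path $f(z^k)\le f(z^{k-1})\le\dots\le f(z^0)=f(x^0)$ (recall $z^0=x^0$), hence $z^k$ lies in the level set of Assumption~\ref{as:bounded_level_set} and \eqref{eq:gradient_lower_bound} gives $\|\nabla f(z^k)\|_\cD\ge (f(z^k)-f(x^*))/R_0$ \emph{pathwise}, i.e.\ before taking any expectation.

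Next I would substitute this bound into \eqref{eq:key_lemma} with $\gamma^k\equiv\gamma$ and subtract $f(x^*)$ from both sides, obtaining
\[
\EE_{s^k\sim\cD}\!\left[f(z^{k+1})-f(x^*)\right]\le \left(1-\frac{\gamma\mu_\cD}{(1-\beta)R_0}\right)\!\left(f(z^k)-f(x^*)\right)+\frac{L\gamma^2\gamma_\cD}{2(1-\beta)^2}.
\]
Taking total expectation and writing $r_k\eqdef\EE[f(z^k)-f(x^*)]\ge 0$ (so $r_0=f(x^0)-f(x^*)$) and $q\eqdef 1-\frac{\gamma\mu_\cD}{(1-\beta)R_0}$, the hypothesis $0<\gamma<\frac{(1-\beta)R_0}{\mu_\cD}$ ensures $q\in(0,1)$, and $r_{k+1}\le q\,r_k+\frac{L\gamma^2\gamma_\cD}{2(1-\beta)^2}$. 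Iterating this and summing the geometric series,
\[
r_k\le q^k r_0+\frac{L\gamma^2\gamma_\cD}{2(1-\beta)^2}\sum_{j=0}^{k-1}q^j\le q^k r_0+\frac{L\gamma^2\gamma_\cD}{2(1-\beta)^2}\cdot\frac{1}{1-q},
\]
and since $1-q=\frac{\gamma\mu_\cD}{(1-\beta)R_0}$ the trailing term simplifies to $\frac{L\gamma\gamma_\cD R_0}{2(1-\beta)\mu_\cD}$, which is exactly \eqref{eq:cvx_constant_stepsize}.

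Finally, for the complexity claim I would plug in $\gamma=\frac{\varepsilon(1-\beta)\mu_\cD}{L\gamma_\cD R_0}$: a direct check shows this obeys $\gamma<\frac{(1-\beta)R_0}{\mu_\cD}$ precisely when $\varepsilon<\frac{L\gamma_\cD R_0^2}{\mu_\cD^2}$ (the boundary value $\varepsilon=\frac{L\gamma_\cD R_0^2}{\mu_\cD^2}$ gives $q=0$ and the recursion argument still goes through), and with this $\gamma$ the additive term in \eqref{eq:cvx_constant_stepsize} equals $\varepsilon/2$. Using $1+x\le e^x$ gives $q^K\le e^{-K(1-q)}=e^{-K\varepsilon\mu_\cD^2/(L\gamma_\cD R_0^2)}$; imposing $q^K r_0\le\varepsilon/2$ and solving for $K$ yields exactly $K=\frac{1}{\varepsilon}\frac{L\gamma_\cD R_0^2}{\mu_\cD^2}\ln\!\frac{2(f(x^0)-f(x^*))}{\varepsilon}$ as in \eqref{eq:cvx_constant_stepsize_number_of_iterations}, and then $r_K\le \varepsilon/2+\varepsilon/2=\varepsilon$. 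I do not anticipate a genuine obstacle here; the only thing that needs care is the pathwise level-set inclusion in the first step (guaranteed by the deterministic monotonicity \eqref{eq:monotonicity}), together with checking admissibility of the chosen step size so that $q\in[0,1)$ and the geometric series is summable — the rest is routine bookkeeping.
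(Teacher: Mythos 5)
Your proposal is correct and follows essentially the same route as the paper: apply the key lemma together with the level-set bound \eqref{eq:gradient_lower_bound} (justified by the pathwise monotonicity \eqref{eq:monotonicity}), unroll the resulting linear recursion, bound the geometric sum by its infinite series, and then verify the stated $\gamma$ and $K$. Your explicit remark that the boundary case $\varepsilon=\frac{L\gamma_\cD R_0^2}{\mu_\cD^2}$ (giving $q=0$) still goes through is a small but welcome clarification that the paper leaves implicit.
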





In order to get rid of factor $\ln\frac{2(f(x^0)-f(x^*))}{\varepsilon}$ in the complexity we consider decreasing stepsizes.
\begin{theorem}[Decreasing stepsizes]\label{thm:cvx_decreasing_stepsizes}
    Let Assumptions \ref{ass:l_smoothness}, \ref{ass:stp_general_assumption} and \ref{as:bounded_level_set} be satisfied. If we set $\gamma^k = \frac{2}{\alpha k + \theta}$, where $\alpha = \frac{\mu_\cD}{(1-\beta)R_0}$ and $\theta \ge \frac{2}{\alpha}$, then for the iterates of {\tt SMTP} method the following inequality holds:
   	\begin{equation}\label{eq:cvx_decreasing_stepsizes}
   		\EE\left[f(z^k)\right] - f(x^*) \le \frac{1}{\eta k+1}\max\left\{ f(x^0) - f(x^*), \frac{2L\gamma_\cD}{\alpha\theta(1-\beta)^2}\right\},	
   	\end{equation}
    where $\eta\eqdef \frac{\alpha}{\theta}$. Then, if we choose $\gamma^k = \frac{2\alpha}{\alpha^2k+2}$ where $\alpha = \frac{\mu_\cD}{(1-\beta)R_0}$ and run {\tt SMTP} for $k = K$ iterations where
	\begin{equation}\label{eq:cvx_decreasing_stepsizes_number_of_iterations}
		K = \frac{1}{\varepsilon}\cdot\frac{2R_0^2}{\mu_\cD^2}\max\left\{(1-\beta)^2(f(x^0)-f(x^*)), L\gamma_\cD\right\} - \frac{2(1-\beta)^2R_0^2}{\mu_\cD^2},\qquad \varepsilon > 0,
	\end{equation}
	 we get $\EE\left[f(z^K)\right] - f(x^*) \le \varepsilon$.
\end{theorem}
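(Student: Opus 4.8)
The plan is to reduce the vector dynamics of {\tt SMTP} to a one-dimensional recursion in the expected suboptimality $r^k \eqdef \EE[f(z^k)] - f(x^*)$ and then unroll it by induction, following the standard template for stochastic methods with $\Theta(1/k)$ stepsizes.

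\textbf{From the key lemma to a scalar recursion.} Since $z^0 = x^0$, the monotonicity property \eqref{eq:monotonicity} gives $f(z^k) \le f(x^0)$ for all $k$, so the level-set inequality \eqref{eq:gradient_lower_bound} is valid at every iterate: $\|\nabla f(z^k)\|_\cD \ge (f(z^k)-f(x^*))/R_0$. Plugging this into \eqref{eq:key_lemma}, subtracting $f(x^*)$, and taking total expectation yields, with $\alpha \eqdef \frac{\mu_\cD}{(1-\beta)R_0}$ (as in the statement) and $B \eqdef \frac{L\gamma_\cD}{2(1-\beta)^2}$,
\[
  r^{k+1} \;\le\; (1-\alpha\gamma^k)\,r^k + B(\gamma^k)^2, \qquad k\ge 0 ,
\]
and the hypothesis $\theta\ge 2/\alpha$ keeps the stepsizes $\gamma^k = 2/(\alpha k+\theta)$ small enough that the contraction factor is well behaved across all $k$ (paralleling the bound $\gamma<\frac{(1-\beta)R_0}{\mu_\cD}$ in Theorem~\ref{thm:cvx_constant_stepsize}; when $1-\alpha\gamma^k<0$ one drops that term using $r^k\ge 0$).

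\textbf{Induction.} Set $C \eqdef \max\left\{r^0,\ \frac{2L\gamma_\cD}{\alpha\theta(1-\beta)^2}\right\} = \max\left\{r^0,\ \frac{4B}{\alpha\theta}\right\}$ and $\eta\eqdef\alpha/\theta$; the claim is $r^k \le C/(\eta k+1)$ for all $k$, which is exactly \eqref{eq:cvx_decreasing_stepsizes} since $r^0 = f(x^0)-f(x^*)$. The base case $k=0$ holds because $C\ge r^0$. For the step, write $a \eqdef \alpha k + \theta$, so $\gamma^k = 2/a$ and $1/(\eta k+1) = \theta/a$; using $r^k \le C\theta/a$ and then $4B\le C\alpha\theta$,
\[
  r^{k+1} \;\le\; \left(1-\frac{2\alpha}{a}\right)\frac{C\theta}{a} + \frac{4B}{a^2}
  \;\le\; \frac{C\theta(a-2\alpha)+C\alpha\theta}{a^2}
  \;=\; \frac{C\theta(a-\alpha)}{a^2}
  \;\le\; \frac{C\theta}{a+\alpha} \;=\; \frac{C}{\eta(k+1)+1},
\]
where the final inequality is the elementary bound $(a-\alpha)(a+\alpha)=a^2-\alpha^2\le a^2$. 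This proves \eqref{eq:cvx_decreasing_stepsizes}. Taking $\gamma^k = \frac{2\alpha}{\alpha^2k+2}$ corresponds to $\theta = 2/\alpha$, so $\eta = \alpha^2/2$ and $C = \max\left\{f(x^0)-f(x^*),\ \frac{L\gamma_\cD}{(1-\beta)^2}\right\}$; then $r^K \le \varepsilon$ as soon as $\eta K + 1 \ge C/\varepsilon$, i.e.\ $K \ge \frac{2}{\alpha^2}\left(\frac{C}{\varepsilon}-1\right)$, and substituting $\alpha^{-2} = \frac{(1-\beta)^2R_0^2}{\mu_\cD^2}$ together with the value of $C$ reproduces \eqref{eq:cvx_decreasing_stepsizes_number_of_iterations}.

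\textbf{Anticipated difficulty.} Everything outside the induction is bookkeeping built directly on Lemma~\ref{lem:key_lemma}, \eqref{eq:monotonicity} and \eqref{eq:gradient_lower_bound}. The real content is the inductive step: guessing the merged constant $C$ so that both $r^0$ and the accumulated variance term are absorbed simultaneously, and verifying the short inequality chain above — in particular pinning down exactly how the constraint $\theta\ge 2/\alpha$ controls the factor $1-\alpha\gamma^k$ for the first few iterations.
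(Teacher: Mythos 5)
Your proposal is correct and follows essentially the same route as the paper: you derive exactly the scalar recursion $\EE[f(z^{k+1})-f(x^*)]\le(1-\alpha\gamma^k)\EE[f(z^k)-f(x^*)]+\frac{L\gamma_\cD}{2(1-\beta)^2}(\gamma^k)^2$ that the paper obtains in its constant-stepsize proof, and your induction is precisely the proof of the paper's Lemma~\ref{lem:decreasing_stepsizes} (Lemma~6 of \cite{mishchenko2019distributed}) written out inline rather than cited. The only substantive remark is that the edge case $1-\alpha\gamma^k<0$ you flag is glossed over in the paper's lemma as well, so your treatment is no less rigorous than the original.
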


We notice that if we choose $\beta$ sufficiently close to $1$, we will obtain from the formula \eqref{eq:cvx_decreasing_stepsizes_number_of_iterations} that $K \approx \frac{2R_0^2L\gamma_{\cD}}{\varepsilon\mu_{\cD}^2}$. 


\subsection{Strongly Convex Case}\label{sec:str_cvx}
In this section we present our complexity results for Algorithm \ref{alg:SMTP} when $f$ is $\mu$-strongly convex. 

\begin{assumption}\label{as:strong_convexity}
	We assume that $f$ is $\mu$-strongly convex with respect to the norm {\bluetext $\|\cdot\|_{\cD}^*$}:
	\begin{equation}\label{eq:strong_convexity}
	   f(y) \ge f(x) + \<\nabla f(x), y-x > + \frac{\mu}{2}{\bluetext \left(\|y-x\|_{\cD}^*\right)}^2,\quad \forall x,y\in\R^d.
	\end{equation}
\end{assumption}
It is well known that strong convexity implies
\begin{equation}\label{eq:consequence_of_strong_convexity}
	\|\nabla f(x)\|_\cD^2 \ge 2\mu\left(f(x) - f(x^*)\right).
\end{equation}


\begin{theorem}[Solution-dependent stepsizes]\label{thm:str_cvx_sol_dep_stepsizes}
Let Assumptions \ref{ass:l_smoothness}, \ref{ass:stp_general_assumption} and \ref{as:strong_convexity} be satisfied. If we set $\gamma^k = \frac{(1-\beta)\theta_k\mu_\cD}{L}\sqrt{2\mu(f(z^k)-f(x^*))}$ for some $\theta_k\in(0,2)$ such that $\theta = \inf\limits_{k\ge 0}\{2\theta_k - \gamma_\cD\theta_k^2\} \in \left(0,\nicefrac{L}{(\mu_\cD^2\mu)}\right)$, then for the iterates of {\tt SMTP}, the following inequality holds:
	\begin{equation}\label{eq:str_cvx_sol_dep_stepsizes}
		\EE\left[f(z^k)\right] - f(x^*) \le \left(1 - \frac{\theta\mu_\cD^2\mu}{L}\right)^{k}\left(f(x^0) - f(x^*)\right).
	\end{equation}
	Then, If we run {\tt SMTP} for $k = K$ iterations where
	\begin{equation}\label{eq:str_cvx_sol_dep_stepsizes_number_of_iterations}
		K = \frac{\kappa}{\theta\mu_\cD^2}\ln\left(\frac{f(x^0)-f(x^*)}{\varepsilon}\right),\qquad \varepsilon > 0,
	\end{equation}
	 where $\kappa\eqdef \frac{L}{\mu}$ is the condition number of the objective, we will get $\EE\left[f(z^K)\right] - f(x^*) \le \varepsilon$.
\end{theorem}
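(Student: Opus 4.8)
The plan is to derive the recursion \eqref{eq:str_cvx_sol_dep_stepsizes} from the Key Lemma (Lemma~\ref{lem:key_lemma}) by plugging in the solution-dependent stepsize and then using strong convexity. First I would take expectation in \eqref{eq:key_lemma} conditioned on everything up to iteration $k$, which gives
\[
\EE_{s^k\sim\cD}\left[f(z^{k+1})\right] \le f(z^k) - \frac{\gamma^k\mu_\cD}{1-\beta}\|\nabla f(z^k)\|_\cD + \frac{L(\gamma^k)^2\gamma_\cD}{2(1-\beta)^2}.
\]
Now substitute $\gamma^k = \frac{(1-\beta)\theta_k\mu_\cD}{L}\sqrt{2\mu(f(z^k)-f(x^*))}$. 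The linear-in-$\gamma^k$ term becomes $-\frac{\theta_k\mu_\cD^2}{L}\sqrt{2\mu(f(z^k)-f(x^*))}\,\|\nabla f(z^k)\|_\cD$, and by the consequence of strong convexity \eqref{eq:consequence_of_strong_convexity}, $\|\nabla f(z^k)\|_\cD \ge \sqrt{2\mu(f(z^k)-f(x^*))}$, so this term is bounded above by $-\frac{\theta_k\mu_\cD^2}{L}\cdot 2\mu(f(z^k)-f(x^*))$. The quadratic term becomes $\frac{L\gamma_\cD}{2(1-\beta)^2}\cdot\frac{(1-\beta)^2\theta_k^2\mu_\cD^2}{L^2}\cdot 2\mu(f(z^k)-f(x^*)) = \frac{\gamma_\cD\theta_k^2\mu_\cD^2\mu}{L}(f(z^k)-f(x^*))$. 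Combining, with $r^k \eqdef f(z^k)-f(x^*)$,
\[
\EE_{s^k\sim\cD}\left[r^{k+1}\right] \le \left(1 - \frac{(2\theta_k - \gamma_\cD\theta_k^2)\mu_\cD^2\mu}{L}\right) r^k \le \left(1 - \frac{\theta\mu_\cD^2\mu}{L}\right) r^k,
\]
where the last step uses $2\theta_k - \gamma_\cD\theta_k^2 \ge \theta$ for all $k$ and the fact that $1 - \frac{\theta\mu_\cD^2\mu}{L} \ge 0$ (which holds because $\theta < L/(\mu_\cD^2\mu)$, so the multiplier stays nonnegative and the inequality is preserved). Taking full expectation and unrolling the recursion over $k$ iterations yields \eqref{eq:str_cvx_sol_dep_stepsizes}.

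For the iteration-complexity statement, I would impose $\left(1 - \frac{\theta\mu_\cD^2\mu}{L}\right)^K r^0 \le \varepsilon$, take logarithms, and use the bound $\ln\left(\frac{1}{1-x}\right) \ge x$ for $x\in[0,1)$ with $x = \frac{\theta\mu_\cD^2\mu}{L}$; this gives that $K = \frac{L}{\theta\mu_\cD^2\mu}\ln\left(\frac{r^0}{\varepsilon}\right) = \frac{\kappa}{\theta\mu_\cD^2}\ln\left(\frac{f(x^0)-f(x^*)}{\varepsilon}\right)$ iterations suffice, recalling $\kappa = L/\mu$ and $f(z^0) = f(x^0)$ since $z^0 = x^0$.

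The main subtlety — not really an obstacle but the point needing care — is handling the fact that $\gamma^k$ is itself random (it depends on $f(z^k)$, which depends on past sampling). This is fine because $\gamma^k$ is measurable with respect to the $\sigma$-algebra generated by $s^0,\ldots,s^{k-1}$, so conditioning on that history lets us treat $\gamma^k$ and $f(z^k)$ as constants when we apply \eqref{eq:key_lemma}; the tower property then propagates the one-step contraction to full expectation. A second point to verify is that the chosen $\gamma^k$ indeed satisfies any implicit constraints used in deriving the Key Lemma (it holds for arbitrary $\gamma^k > 0$, so no constraint is binding), and that $\theta \in (0, L/(\mu_\cD^2\mu))$ guarantees the contraction factor lies in $[0,1)$ so the bound is non-vacuous. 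Everything else is the routine substitution and the standard logarithmic inversion.
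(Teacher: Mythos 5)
Your proposal is correct and follows essentially the same route as the paper's proof: substitute the solution-dependent stepsize into the Key Lemma, bound $\|\nabla f(z^k)\|_\cD$ from below via the strong-convexity consequence \eqref{eq:consequence_of_strong_convexity} to obtain the one-step contraction with factor $1 - (2\theta_k - \gamma_\cD\theta_k^2)\frac{\mu_\cD^2\mu}{L}$, pass to the infimum $\theta$, unroll, and invert with $(1-x)^K \le e^{-Kx}$. Your explicit remarks on the measurability of $\gamma^k$ and on why $\theta < L/(\mu_\cD^2\mu)$ keeps the contraction factor nonnegative are sound additions that the paper leaves implicit.
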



Note that the previous result uses stepsizes that depends on the optimal solution $f(x^*)$ which is often not known in practice. The next theorem removes this drawback without spoiling the convergence rate. However, we need an additional assumption on the distribution $\cD$ and one extra function evaluation.

\begin{assumption}\label{as:distr_unit_vectors}
	We assume that for all $s\sim\cD$ we have $\|s\|_2 = 1$.
\end{assumption}

\begin{theorem}[Solution-free stepsizes]\label{thm:str_cvx_sol_free_stepsizes}
Let Assumptions \ref{ass:l_smoothness}, \ref{ass:stp_general_assumption}, \ref{as:strong_convexity} and \ref{as:distr_unit_vectors} be satisfied. If additionally we compute $f(z^k + ts^k)$, set $\gamma^k = \nicefrac{(1-\beta)|f(z^k + ts^k) - f(z^k)|}{(L t)}$ for $t>0$ and assume that $\cD$ is such that $\mu_\cD^2 \le \nicefrac{L}{\mu}$, then for the iterates of {\tt SMTP} the following inequality holds:
	\begin{equation}\label{eq:str_cvx_sol_free_stepsizes}
		\EE\left[f(z^k)\right] - f(x^*) \le \left(1 - \frac{\mu_\cD^2\mu}{L}\right)^k\left(f(x^0) - f(x^*)\right) + \frac{L^2t^2}{8\mu_\cD^2\mu}.
	\end{equation}
	Moreover, for any $\varepsilon > 0$ if we set $t$ such that
	\begin{equation}\label{eq:str_cvx_sol_free_stepsizes_t_bound}
		0 < t \le \sqrt{\frac{4\varepsilon\mu_\cD^2\mu}{L^2}},
	\end{equation}		
	 and run {\tt SMTP} for $k = K$ iterations where
	\begin{equation}\label{eq:str_cvx_sol_free_stepsizes_number_of_iterations}
		K = \frac{\kappa}{\mu_\cD^2}\ln\left(\frac{2(f(x^0)-f(x^*))}{\varepsilon}\right),
	\end{equation}
	 where $\kappa\eqdef \frac{L}{\mu}$ is the condition number of $f$, we will have $\EE\left[f(z^K)\right] - f(x^*) \le \varepsilon$.
\end{theorem}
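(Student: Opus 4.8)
The plan is to run a one-step contraction argument built directly on the pointwise descent estimate \eqref{eq:key_lemma_without_expectation} of Lemma~\ref{lem:key_lemma}, plug in the solution-free stepsize \emph{before} taking any expectation, and then unroll a geometric recursion. Write $a_k \eqdef |\langle \nabla f(z^k), s^k\rangle|$ and $\delta_k \eqdef |f(z^k + t s^k) - f(z^k)|$, so that the stepsize is $\gamma^k = \frac{(1-\beta)\delta_k}{Lt}$. Using Assumption~\ref{as:distr_unit_vectors} (hence $\|s^k\|_2^2 = 1$ and $\gamma_\cD = 1$) and substituting this $\gamma^k$ into \eqref{eq:key_lemma_without_expectation} collapses the $(1-\beta)$ factors and yields
\begin{equation}
	f(z^{k+1}) \le f(z^k) - \frac{\delta_k a_k}{Lt} + \frac{\delta_k^2}{2Lt^2}.
\end{equation}

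Next I would estimate $\delta_k$ against $t a_k$. Applying the quadratic upper bound \eqref{eq:quadratic_upper_bound} to the pair $z^k$, $z^k + t s^k$ gives $|\,f(z^k + t s^k) - f(z^k) - t\langle\nabla f(z^k),s^k\rangle\,| \le \frac{L t^2}{2}$, so by the reverse triangle inequality $|\delta_k - t a_k| \le \frac{L t^2}{2}$ (this holds in every regime, including $t a_k < \frac{Lt^2}{2}$, where it follows from $0 \le \delta_k \le t a_k + \frac{Lt^2}{2}$). Completing the square, $-\frac{\delta_k a_k}{Lt} + \frac{\delta_k^2}{2Lt^2} = \frac{(\delta_k - t a_k)^2}{2Lt^2} - \frac{a_k^2}{2L} \le \frac{Lt^2}{8} - \frac{a_k^2}{2L}$, so
\begin{equation}
	f(z^{k+1}) \le f(z^k) + \frac{Lt^2}{8} - \frac{a_k^2}{2L}.
\end{equation}
Taking the conditional expectation $\EE_{s^k\sim\cD}[\cdot]$, Jensen's inequality together with \eqref{eq:inner_product_lower_bound} gives $\EE_{s^k\sim\cD}[a_k^2] \ge \left(\EE_{s^k\sim\cD}[a_k]\right)^2 \ge \mu_\cD^2\|\nabla f(z^k)\|_\cD^2$, and then the strong-convexity consequence \eqref{eq:consequence_of_strong_convexity} gives $\|\nabla f(z^k)\|_\cD^2 \ge 2\mu\left(f(z^k) - f(x^*)\right)$. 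Subtracting $f(x^*)$ produces the recursion
\begin{equation}
	\EE_{s^k\sim\cD}\left[f(z^{k+1}) - f(x^*)\right] \le \left(1 - \frac{\mu_\cD^2\mu}{L}\right)\left(f(z^k) - f(x^*)\right) + \frac{Lt^2}{8},
\end{equation}
where the hypothesis $\mu_\cD^2 \le \nicefrac{L}{\mu}$ ensures the contraction factor lies in $[0,1)$.

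Finally I would take full expectation, set $r_k \eqdef \EE[f(z^k) - f(x^*)]$, $q \eqdef 1 - \frac{\mu_\cD^2\mu}{L}$, $C \eqdef \frac{Lt^2}{8}$, and unroll $r_{k+1} \le q r_k + C$ to get $r_k \le q^k r_0 + \frac{C}{1-q} = \left(1 - \frac{\mu_\cD^2\mu}{L}\right)^k r_0 + \frac{L^2 t^2}{8\mu_\cD^2\mu}$, which is \eqref{eq:str_cvx_sol_free_stepsizes}. For the complexity, I would split the target accuracy: choosing $t$ as in \eqref{eq:str_cvx_sol_free_stepsizes_t_bound} forces the stationary term $\frac{L^2t^2}{8\mu_\cD^2\mu} \le \tfrac{\varepsilon}{2}$, and $q^K r_0 \le \tfrac{\varepsilon}{2}$ holds as soon as $K \ge \frac{\ln(2r_0/\varepsilon)}{-\ln q}$; bounding $-\ln(1-x) \ge x$ for $x = \frac{\mu_\cD^2\mu}{L} \in [0,1)$ shows that $K$ as in \eqref{eq:str_cvx_sol_free_stepsizes_number_of_iterations}, with $\kappa = L/\mu$, suffices. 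The routine parts are the smoothness estimate and the geometric-series unrolling; the one genuinely delicate point is that $\gamma^k$ is itself random through $\delta_k$, so the substitution into \eqref{eq:key_lemma_without_expectation} and the completing-the-square step must be carried out pointwise in $s^k$ before expectation is applied. Note that, unlike the convex case, no bounded-level-set assumption is needed, since \eqref{eq:consequence_of_strong_convexity} is global.
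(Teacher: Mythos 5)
Your proposal is correct and follows essentially the same route as the paper: substitute the solution-free stepsize into the pointwise bound \eqref{eq:key_lemma_without_expectation}, complete the square to isolate a $-\frac{|\langle\nabla f(z^k),s^k\rangle|^2}{2L}$ term plus an error controlled by the finite-difference approximation (your $|\delta_k - t a_k|\le \frac{Lt^2}{2}$ is exactly the paper's bound on the stepsize perturbation $\delta^k$), then apply Jensen plus \eqref{eq:inner_product_lower_bound} and \eqref{eq:consequence_of_strong_convexity} and unroll the geometric recursion. The only cosmetic difference is that you parametrize the square-completion by the function-value difference $\delta_k$ rather than by the deviation of $\gamma^k$ from the optimal stepsize, which changes nothing.
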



\section{ Stochastic Momentum Three Points with Importance Sampling ({\tt SMTP{\_}IS})}\label{sec:smtp_is_}

In this section we consider another assumption, in a similar spirit to \cite{bibi19STPIS}, on the objective.
\begin{assumption}[Coordinate-wise $L$-smoothness]\label{as_sup:coord_wise_L_smoothness}
	We assume that the objective $f$ has coordinate-wise Lipschitz gradient, with Lipschitz constants $L_1,\ldots,L_d > 0$, i.e.\ 
	\begin{equation}\label{eq_sup:coord_wise_L_smoothness}
		f(x+he_i) \le f(x) + \nabla_if(x)h + \frac{L_i}{2}h^2,\qquad \forall x\in\R^d, h\in\R, 
	\end{equation}
	where $\nabla_i f(x)$ is $i$-th partial derivative of $f$ at the point $x$.
\end{assumption}

For this kind of problems we modify {\tt SMTP} and present {\tt STMP{\_}IS} method in  Algorithm~\ref{alg:SMTP_IS}. In general, the idea behind methods with importance sampling and, in particular, behind {\tt SMTP{\_}IS} is to adjust probabilities of sampling in such a way that gives better convergence guarantees. In the case when $f$ satisfies coordinate-wise $L$-smoothness and Lipschitz constants $L_i$ are known it is natural to sample direction $s^k = e_i$ with probability depending on $L_i$ (e.g.\ proportional to $L_i$). One can find more detailed discussion of the importance sampling in \cite{zhao2015stochastic} and \cite{richtarik2016optimal}.

\begin{algorithm}[t]
   \caption{{\tt SMTP{\_}IS}: Stochastic Momentum Three Points with Importance Sampling}
   \label{alg:SMTP_IS}
\begin{algorithmic}[1]
   \Require stepsize parameters $w_1,\ldots,w_n > 0$, probabilities $p_1,\ldots, p_n > 0$ summing to $1$, starting point $x^0 \in \R^d$, $0\le \beta < 1$~--- momentum parameter
	\State Set $v^{-1} = 0$ and $z^0 = x^0$  
    \For{$k=0,1,\dotsc$}
       \State Select $i_k = i$ with probability $p_i > 0$
	   \State Choose stepsize $\gamma_i^k$ proportional to $\frac{1}{w_{i_k}}$        
       \State Let $v_+^k = \beta v^{k-1} + e_{i_k}$ and $v_-^k = \beta v^{k-1} - e_{i_k}$
       \State Let $x_+^{k+1} = x^{k} - \gamma_i^k v_+^k$ and $x_-^{k+1} = x^{k} - \gamma_i^k v_-^k$
       \State Let $z_+^{k+1} = x_+^{k+1} - \frac{\gamma_i^k\beta}{1-\beta}v_+^k$ and $z_-^{k+1} = x_-^{k+1} - \frac{\gamma_i^k\beta}{1-\beta}v_-^k$
       \State Set $z^{k+1} = \arg\min\left\{f(z^k), f(z_+^{k+1}), f(z_-^{k+1})\right\}$
       \State Set $x^{k+1} = \begin{cases}x_+^{k+1},\quad \text{if } z^{k+1} = z_+^{k+1}\\ x_-^{k+1},\quad \text{if } z^{k+1} = z_-^{k+1}\\x^{k},\quad \text{if } z^{k+1} = z^{k}\end{cases}$ and $v^{k+1} = \begin{cases}v_+^{k+1},\quad \text{if } z^{k+1} = z_+^{k+1}\\ v_-^{k+1},\quad \text{if } z^{k+1} = z_-^{k+1}\\v^{k},\quad \text{if } z^{k+1} = z^{k}\end{cases}$
   \EndFor
\end{algorithmic}
\end{algorithm}

Now, we establish the key result which will be used to prove the main complexity results of {\tt STMP{\_}IS}. 
\begin{lem}\label{lem_sup:key_lemma_is}
	Assume that $f$ satisfies Assumption~\ref{as_sup:coord_wise_L_smoothness}. Then for the iterates of {\tt SMTP{\_}IS} the following inequalities hold:
	\begin{equation}\label{eq_sup:key_lemma_without_expectation_is}
		f(z^{k+1}) \le f(z^k) - \frac{\gamma_i^k}{1-\beta}|\nabla_{i_k} f(z^k)| + \frac{L_{i_k}(\gamma_i^k)^2}{2(1-\beta)^2}
	\end{equation}
	and
	\begin{equation}\label{eq_sup:key_lemma_is}
		\EE_{s^k\sim\cD}\left[f(z^{k+1})\right] \le f(z^k) - \frac{1}{1-\beta}\EE\left[\gamma_i^k|\nabla_{i_k} f(z^k)|\mid z^k\right] + \frac{1}{2(1-\beta)^2}\EE\left[L_{i_k}(\gamma_i^k)^2\mid z^k\right].
	\end{equation}	 
\end{lem}

Due to the page limitation, we provide the complexity results of {\tt SMTP{\_}IS} in the Appendix.  
\section{Experiments}
\label{experiments}

\textbf{Experimental Setup.}
We conduct extensive experiments\footnote{The code will be made available online upon acceptance of this work.}
on challenging non-convex problems on the continuous control task from the MuJoCO suit \cite{Todorov_2012}. In particular, we address the problem of model-free control of a dynamical system. Policy gradient methods for model-free reinforcement learning algorithms provide an off-the-shelf model-free approach to learn how to control a dynamical system and are often benchmarked in a simulator. We compare our proposed momentum stochastic three points method {\tt SMTP} and the momentum with importance sampling version {\tt SMTP{\_}IS} against state-of-art DFO based methods as {\tt STP{\_}IS}  \cite{bibi19STPIS} and ARS \cite{Mania_2018}. Moreover, we also compare against classical policy gradient methods as TRPO \cite{schulman2015trust} and NG \cite{Rajeswaran_2017}. We conduct experiments on several environments with varying difficulty \texttt{Swimmer-v1}, \texttt{Hopper-v1}, \texttt{HalfCheetah-v1}, \texttt{Ant-v1}, and \texttt{Humanoid-v1}. 

Note that due to the stochastic nature of problem where $f$ is stochastic, we use the mean of the function values of $f(x^k),f(x^k_+)$ and $f(x^k_-)$, see Algorithm \ref{alg:SMTP}, over K observations. Similar to the  work in  \cite{bibi19STPIS}, we use $K=2$ for \texttt{Swimmer-v1}, $K = 4$ for both \texttt{Hopper-v1} and \texttt{HalfCheetah-v1}, $K=40$ for \texttt{Ant-v1} and  \texttt{Humanoid-v1}. Similar to \cite{bibi19STPIS}, these values were chosen based on the validation performance over the grid that is $K \in \{1,2,4,8,16\}$ for the smaller dimensional problems \texttt{Swimmer-v1}, \texttt{Hopper-v1}, \texttt{HalfCheetah-v1} and $K \in \{20,40,80, 120\}$ for larger dimensional problems \texttt{Ant-v1}, and \texttt{Humanoid-v1}. As for the momentum term, for {\tt SMTP} we set $\beta = 0.5$. For {\tt SMTP{\_}IS}, as the smoothness constants are not available for continuous control, we use the coordinate smoothness constants of a $\theta$ parameterized smooth function $\hat{f}_\theta$ (multi-layer perceptron) that estimates $f$. In particular, consider running any DFO for n steps; with the queried sampled $\{x_i,f(x_i)\}_{i=1}^{n}$, we estimate $f$ by solving \mbox{$\theta_{n+1} = \argmin_{\theta} \sum_i (f(x_i) - \hat{f}(x_i;\theta))^2$}. See \cite{bibi19STPIS} for further implementation details as we follow the same experimental procedure. In contrast to {\tt STP{\_}IS}, our method ({\tt SMTP}) does not required sampling from directions in the canonical basis; hence, we use directions from standard Normal distribution in each iteration. For {\tt SMTP{\_}IS}, we follow a similar procedure as \cite{bibi19STPIS} and sample from columns of a random matrix $B$.

Similar to the standard practice, we perform all experiments with 5 different initialization and measure the average reward, in continuous control we are maximizing the reward function $f$, and best and worst run per iteration. We compare algorithms in terms of reward vs. sample complexity.

\textbf{Comparison Against {\tt STP}}. Our method improves sample complexity of {\tt STP} and {\tt STP{\_}IS} significantly.  Especially for high dimensional problems like {\tt Ant-v1} and {\tt Humanoid-v1}, sample efficiency of {\tt SMTP} is at least as twice as the {\tt STP}. Moreover, {\tt SMTP{\_}IS} helps in some experiments by improving over {\tt SMTP}. However, this is not consistent in all environments. We believe this is largely due to the fact that {\tt SMTP{\_}IS} can only handle sampling from canonical basis similar to {\tt STP{\_}IS}.

\textbf{Comparison Against State-of-The-Art}. We compare our method with state-of-the-art DFO and policy gradient algorithms. For the environments, {\tt Swimmer-v1, Hopper-v1, HalfCheetah-v1} and {\tt Ant-v1}, our method outperforms the state-of-the-art results. Whereas for {\tt Humanoid-v1}, our methods results in a comparable sample complexity. 

\begin{figure}
    \centering
    \includegraphics[width=\columnwidth]{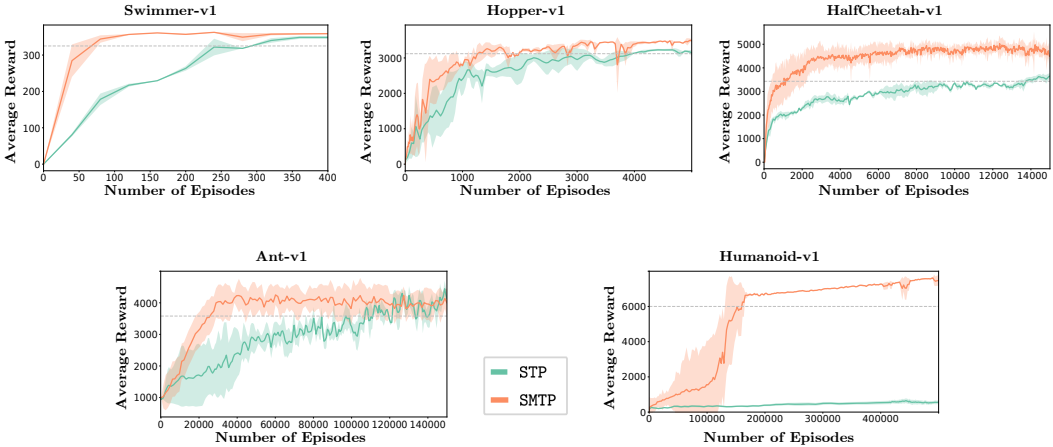}
    \caption{{\tt SMTP} {\bluetext is far superior} to {\tt STP} on all 5 different MuJoCo tasks particularly on the high dimensional \texttt{Humanoid-v1} problem. The horizontal dashed lines are the thresholds used in Table \ref{tab:rl} to demonstrate complexity of each method.}
    \label{fig:my_label}
\end{figure}

\begin{table*}[t]
\vspace{-2mm}
\caption{For each MuJoCo task, we report the average number of episodes required to achieve a predefined reward threshold. Results for our method is averaged over five random seeds, the rest is copied from \citep{Mania_2018} (N/A means the method failed to reach the threshold. UNK means the results is unknown since they are not reported in the literature.)}
  \centering
 \resizebox{\textwidth}{!}{
\begin{tabular}{rccccccccc}
  \toprule
    & Threshold	& \texttt{STP} & \texttt{STP}$_{\text{\texttt{IS}}}$ & \texttt{SMTP} & \texttt{SMTP}$_{\text{\texttt{IS}}}$ & ARS(V1-t) & ARS(V2-t) &	NG-lin	& TRPO-nn \\
    \midrule
    \texttt{Swimmer-v1} & 325 & 320 & 110 & 80 &    100 & 100 & 427 & 1450 & N/A  \\
    \texttt{Hopper-v1} & 3120 & 3970 & 2400  &1264  & 1408   & 51840  & 1973 & 13920 & 10000 \\
    \texttt{HalfCheetah-v1} & 3430 & 13760 & 4420 & 1872 & 1624  & 8106 & 1707 & 11250 & 4250  \\
    \texttt{Ant-v1} & 3580 & 107220 & 43860 & 19890 & 14420 & 58133 & 20800  & 39240 & 73500 \\
    \texttt{Humanoid-v1} & 6000 & N/A & 530200 & 161230 &  207160 & N/A & 142600  & 130000  & UNK \\
      \bottomrule
\end{tabular}}
\label{tab:rl}
\vspace{-2mm}
\vspace{-0.25cm}
\vspace{-2pt}
\end{table*}
\section{Conclusion}
\label{conclusion}
We have proposed, {\tt SMTP}, the first heavy ball momentum DFO based algorithm with convergence rates for non-convex, convex and strongly convex functions under generic sampling direction. We specialize the sampling to the set of coordinate bases and further improve rates by proposing a momentum and importance sampling version {\tt SMPT{\_}IS} with new convergence rates for non-convex, convex and strongly convex functions too. We conduct large number of experiments on the task of controlling dynamical systems. {\bluetext We outperform two different policy gradient methods and achieve comparable or better performance to the best DFO algorithm (ARS) on the respective environments.}

{\small
\bibliography{iclr2020_conference}
\bibliographystyle{iclr2020_conference}
}

\newpage
\appendix
\part*{\Large A Stochastic Derivative Free Optimization Method with Momentum\\ \phantom{xx}
 \Large (Supplementary Material)}

\section{Preliminaries}\label{sec_sup:preliminaries}

We first list the main assumptions.
\begin{assumption}\label{ass_sup:l_smoothness}($L$-smoothness)
	We say that $f$ is \textit{$L$-smooth} if:
	\begin{equation}\label{eq_sup:L_smoothness}
		\|\nabla f(x) - \nabla f(y)\|_2 \le L\|x-y\|_2 \quad \forall x,y\in\R^d.
	\end{equation}
\end{assumption} 

\begin{assumption}\label{ass_sup:stp_general_assumption}
	The probability distribution $\cD$ on $\R^d$ satisfies the following properties:
	\begin{enumerate}
	\item The quantity $\gamma_\cD \eqdef \EE_{s\sim\cD}\|s\|_2^2$ is positive and finite.
	\item There is a constant $\mu_\cD > 0$ and norm $\|\cdot\|_\cD$ on $\R^d$ such that for all $g\in\R^d$
	\begin{equation}\label{eq_sup:inner_product_lower_bound}
		\EE_{s\sim\cD}|\langle g,s\rangle| \ge \mu_\cD\|g\|_{\cD}.
	\end{equation}
	\end{enumerate}
\end{assumption}

We establish the key lemma which will be used to prove the theorems stated in the paper.
\begin{lem}\label{lem_sup:key_lemma}
	Assume that $f$ is $L$-smooth and $\cD$ satisfies Assumption~\ref{ass_sup:stp_general_assumption}. Then for the iterates of {\tt SMTP} the following inequalities hold:
	\begin{equation}\label{eq_sup:key_lemma_without_expectation}
		f(z^{k+1}) \le f(z^k) - \frac{\gamma^k}{1-\beta}|\langle\nabla f(z^k),s^k\rangle| + \frac{L(\gamma^k)^2}{2(1-\beta)^2}\|s^k\|_2^2
	\end{equation}
	and
	\begin{equation}\label{eq_sup:key_lemma}
		\EE_{s^k\sim\cD}\left[f(z^{k+1})\right] \le f(z^k) - \frac{\gamma^k\mu_\cD}{1-\beta}\|\nabla f(z^k)\|_\cD + \frac{L(\gamma^k)^2\gamma_\cD}{2(1-\beta)^2}.
	\end{equation}	 
\end{lem}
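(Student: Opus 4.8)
The plan is to reduce the behaviour of {\tt SMTP} on the pair $(x^k,v^k)$ to that of {\tt STP} on the \emph{virtual iterates} $z^k$, and then transplant the Key Lemma of \cite{Bergou_2018}. The central step is to show that the update rule preserves the invariant
\[
z^{k} = x^{k} - \frac{\gamma^{k-1}\beta}{1-\beta}\,v^{k-1},
\]
the {\tt SMTP} analogue of the heavy-ball virtual iterate $z^k=x^k+\frac{\beta}{1-\beta}(x^k-x^{k-1})$ from \cite{yang2016unified}. It holds at $k=0$ since $v^{-1}=0$ and $z^0=x^0$. Inductively, the candidate points $z_\pm^{k+1}=x_\pm^{k+1}-\frac{\gamma^k\beta}{1-\beta}v_\pm^k$ satisfy the same relation with $x_\pm^{k+1}$ and $v_\pm^k$, and the three-way selection rule sets $(z^{k+1},x^{k+1},v^{k})$ equal to one of these triples (or leaves the triple unchanged when $z^{k+1}=z^k$), so the relation is carried over. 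Substituting $v_\pm^k=\beta v^{k-1}\pm s^k$ together with the invariant into the definition of $z_\pm^{k+1}$ collapses everything (for a fixed stepsize) to the clean identity
\[
z_\pm^{k+1} = z^k \mp \frac{\gamma^k}{1-\beta}\,s^k ,
\]
i.e.\ one {\tt SMTP} step is, on the virtual sequence, exactly an {\tt STP} step in direction $\pm s^k$ with effective stepsize $\gamma^k/(1-\beta)$.

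Given this identity, \eqref{eq:key_lemma_without_expectation} follows exactly as the Key Lemma in \cite{Bergou_2018}. I would apply the quadratic upper bound \eqref{eq:quadratic_upper_bound} with $x=z^k$, $y=z_\pm^{k+1}$ to get, for both signs,
\[
f(z_\pm^{k+1}) \le f(z^k) \mp \frac{\gamma^k}{1-\beta}\langle\nabla f(z^k),s^k\rangle + \frac{L(\gamma^k)^2}{2(1-\beta)^2}\|s^k\|_2^2 ,
\]
then select the sign for which $\mp\langle\nabla f(z^k),s^k\rangle=-|\langle\nabla f(z^k),s^k\rangle|$, and finally use $f(z^{k+1})=\min\{f(z^k),f(z_+^{k+1}),f(z_-^{k+1})\}\le\min\{f(z_+^{k+1}),f(z_-^{k+1})\}$ to conclude \eqref{eq:key_lemma_without_expectation} (the extra $f(z^k)$ entry in the $\min$ only improves the bound).

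For \eqref{eq:key_lemma} I would note that $z^k$ is a deterministic function of $s^0,\dots,s^{k-1}$, hence independent of $s^k$; taking the conditional expectation $\EE_{s^k\sim\cD}[\cdot]$ of \eqref{eq:key_lemma_without_expectation} and invoking Assumption~\ref{ass:stp_general_assumption} — namely $\EE_{s^k\sim\cD}|\langle\nabla f(z^k),s^k\rangle|\ge\mu_\cD\|\nabla f(z^k)\|_\cD$ and $\EE_{s^k\sim\cD}\|s^k\|_2^2=\gamma_\cD$ — yields \eqref{eq:key_lemma} directly.

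The only delicate point is the invariant and the resulting identity for $z_\pm^{k+1}$: one must check it survives all three branches of the selection rule, and, for the non-constant stepsize theorems (\ref{thm:cvx_decreasing_stepsizes}, \ref{thm:str_cvx_sol_dep_stepsizes}, \ref{thm:str_cvx_sol_free_stepsizes}), keep track of the index on $\gamma$ — the identity $z_\pm^{k+1}=z^k\mp\frac{\gamma^k}{1-\beta}s^k$ is exact when the stepsize is held constant, and for varying stepsizes one either re-anchors the virtual iterate or absorbs the $O(|\gamma^k-\gamma^{k-1}|\,\|v^{k-1}\|)$ discrepancy. Once the virtual-iterate reduction is in place, everything else is the short {\tt STP} descent argument applied to $z^k$ rather than $x^k$.
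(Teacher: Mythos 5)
Your proposal is correct and follows essentially the same route as the paper: establish the virtual-iterate invariant relating $z^k$, $x^k$ and $v^{k-1}$ by induction over the three branches of the selection rule, collapse $z_\pm^{k+1}$ to $z^k \mp \frac{\gamma^k}{1-\beta}s^k$, apply the quadratic upper bound \eqref{eq:quadratic_upper_bound} to both candidates, take the minimum, and then take conditional expectation using Assumption~\ref{ass:stp_general_assumption}. Your remark about the index on $\gamma$ in the invariant is well taken — the paper states the recursion as $z^k = x^k - \frac{\gamma^k\beta}{1-\beta}v^{k-1}$ with the \emph{current} stepsize, which is exact only for constant $\gamma^k$, so your explicit flagging of the $O(|\gamma^k-\gamma^{k-1}|\,\|v^{k-1}\|)$ discrepancy for the varying-stepsize theorems is, if anything, more careful than the paper's own treatment.
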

\begin{proof}
	By induction one can show that
	\begin{equation}\label{eq:z_recursion}
		z^{k} = x^k - \frac{\gamma^k\beta}{1-\beta}v^{k-1}.
	\end{equation}
	That is, for $k = 0$ this recurrence holds and update rules for $z^{k}, x^k$ and $v^{k-1}$ do not brake it.
	From this we get
	\begin{eqnarray*}
		z_+^{k+1} &=& x_+^{k+1} - \frac{\gamma^k\beta}{1-\beta}v_+^k = x^k - \gamma^k v_+^k - \frac{\gamma^k\beta}{1-\beta}v_+^k\\
		&=& x^k - \frac{\gamma^k}{1-\beta}v_+^k = x^k - \frac{\gamma^k\beta}{1-\beta}v^{k-1} - \frac{\gamma^k}{1-\beta}s^k\\
		&\overset{\eqref{eq:z_recursion}}{=}& z^k - \frac{\gamma^k}{1-\beta}s^k.
	\end{eqnarray*}
	Similarly,
	\begin{eqnarray*}
		z_-^{k+1} &=& x_-^{k+1} - \frac{\gamma^k\beta}{1-\beta}v_-^k = x^k - \gamma^k v_-^k - \frac{\gamma^k\beta}{1-\beta}v_-^k\\
		&=& x^k - \frac{\gamma^k}{1-\beta}v_-^k = x^k - \frac{\gamma^k\beta}{1-\beta}v^{k-1} + \frac{\gamma^k}{1-\beta}s^k\\
		&\overset{\eqref{eq:z_recursion}}{=}& z^k + \frac{\gamma^k}{1-\beta}s^k.
	\end{eqnarray*}
	It implies that
	\begin{eqnarray*}
		f(z_+^{k+1}) &\overset{\eqref{eq:quadratic_upper_bound}}{\le}& f(z^k) + \langle\nabla f(z^k), z_+^{k+1} - z_k \rangle + \frac{L}{2}\|z_+^{k+1}-z^k\|_2^2\\
		&=& f(z^k) - \frac{\gamma^k}{1-\beta}\langle\nabla f(z^k),s^k\rangle + \frac{L(\gamma^k)^2}{2(1-\beta)^2}\|s^k\|_2^2	
	\end{eqnarray*}
	and
	\begin{eqnarray*}
		f(z_-^{k+1}) &\le& f(z^k) + \frac{\gamma^k}{1-\beta}\langle\nabla f(z^k),s^k\rangle + \frac{L(\gamma^k)^2}{2(1-\beta)^2}\|s^k\|_2^2.	
	\end{eqnarray*}
	Unifying these two inequalities we get
	\begin{eqnarray*}
		f(z^{k+1}) &\le& \min\{f(z_+^{k+1}),f(z_-^{k+1})\} = f(z^k) - \frac{\gamma^k}{1-\beta}|\langle\nabla f(z^k),s^k\rangle| + \frac{L(\gamma^k)^2}{2(1-\beta)^2}\|s^k\|_2^2,
	\end{eqnarray*}
	which proves \eqref{eq_sup:key_lemma_without_expectation}.
	Finally, taking the expectation $\EE_{s^k\sim\cD}$ of both sides of the previous inequality and invoking Assumption~\ref{ass_sup:stp_general_assumption}, we obtain
	\begin{equation*}
		\EE_{s^k\sim\cD}\left[f(z^{k+1})\right] \le  f(z^k) - \frac{\gamma^k\mu_\cD}{1-\beta}\|\nabla f(z^k)\|_\cD + \frac{L(\gamma^k)^2\gamma_\cD}{2(1-\beta)^2}.
	\end{equation*}
\end{proof}

\newpage
\section{{\tt SMTP{\_}IS}: Stochastic Momentum Three Points with Importance Sampling}\label{sec:smtp_is}

Again by definition of $z^{k+1}$ we get that the sequence $\{f(z^k)\}_{k\ge 0}$ is monotone:
\begin{equation}\label{eq_sup:monotonicity_is}
	f(z^{k+1}) \le f(z^k) \qquad \forall k\ge 0.
\end{equation}

\begin{lem}\label{lem_sup:key_lemma_is}
	Assume that $f$ satisfies Assumption~\ref{as_sup:coord_wise_L_smoothness}. Then for the iterates of {\tt SMTP{\_}IS} the following inequalities hold:
	\begin{equation}\label{eq_sup:key_lemma_without_expectation_is}
		f(z^{k+1}) \le f(z^k) - \frac{\gamma_i^k}{1-\beta}|\nabla_{i_k} f(z^k)| + \frac{L_{i_k}(\gamma_i^k)^2}{2(1-\beta)^2}
	\end{equation}
	and
	\begin{equation}\label{eq_sup:key_lemma_is}
		\EE_{s^k\sim\cD}\left[f(z^{k+1})\right] \le f(z^k) - \frac{1}{1-\beta}\EE\left[\gamma_i^k|\nabla_{i_k} f(z^k)|\mid z^k\right] + \frac{1}{2(1-\beta)^2}\EE\left[L_{i_k}(\gamma_i^k)^2\mid z^k\right].
	\end{equation}	 
\end{lem}
\begin{proof}
	In the similar way as in Lemma~\ref{lem_sup:key_lemma} one can show that
	\begin{equation}\label{eq_sup:z_recursion_is}
		z^{k} = x^k - \frac{\gamma_i^k\beta}{1-\beta}v^{k-1}
	\end{equation}
	and
	\begin{eqnarray*}
		z_+^{k+1} = z^k - \frac{\gamma_i^k}{1-\beta}e_{i_k},
	\end{eqnarray*}
	\begin{eqnarray*}
		z_-^{k+1} = z^k + \frac{\gamma_i^k}{1-\beta}e_{i_k}.
	\end{eqnarray*}
	It implies that
	\begin{eqnarray*}
		f(z_+^{k+1}) &\overset{\eqref{eq_sup:coord_wise_L_smoothness}}{\le}& f(z^k) - \frac{\gamma_i^k}{1-\beta}\nabla_i f(z^k) + \frac{L_{i_k}(\gamma_i^k)^2}{2(1-\beta)^2}	
	\end{eqnarray*}
	and
	\begin{eqnarray*}
		f(z_-^{k+1}) &\le& f(z^k) + \frac{\gamma_i^k}{1-\beta}\nabla_i f(z^k) + \frac{L_{i_k}(\gamma_i^k)^2}{2(1-\beta)^2}.	
	\end{eqnarray*}
	Unifying these two inequalities we get
	\begin{eqnarray*}
		f(z^{k+1}) &\le& \min\{f(z_+^{k+1}),f(z_-^{k+1})\} = f(z^k) - \frac{\gamma_i^k}{1-\beta}|\nabla_i f(z^k)| + \frac{L_{i_k}(\gamma_i^k)^2}{2(1-\beta)^2},
	\end{eqnarray*}
	which proves \eqref{eq_sup:key_lemma_without_expectation_is}.
	Finally, taking the expectation $\EE[\cdot\mid z^k]$ conditioned on $z^k$ from the both sides of the previous inequality we obtain
	\begin{equation*}
		\EE\left[f(z^{k+1})\mid z^k\right] \le  f(z^k) - \frac{1}{1-\beta}\EE\left[\gamma_i^k|\nabla_{i_k} f(z^k)|\mid z^k\right] + \frac{1}{2(1-\beta)^2}\EE\left[L_{i_k}(\gamma_i^k)^2\mid z^k\right].
	\end{equation*}
\end{proof}

\subsection{Non-convex Case}\label{sec:non_cvx_is}
\begin{theorem}\label{thm:non_cvx_is}
	Assume that $f$ satisfies Assumption~\ref{as_sup:coord_wise_L_smoothness}. Let {\tt SMTP{\_}IS} with $\gamma_i^k = \frac{\gamma}{w_{i_k}}$ for some $\gamma > 0$ produce points $\{z^0,z^1,\ldots, z^{K-1}\}$ and $\overline{z}^K$ is chosen uniformly at random among them. Then
	\begin{equation}\label{eq_sup:non_cvx_is}
	\EE\left[\|\nabla f(\overline{z}^K)\|_1\right] \le \frac{(1-\beta)(f(x^0) - f(x^*))}{K\gamma\min\limits_{i=1,\ldots,d}\frac{p_i}{w_i}} + \frac{\gamma}{2(1-\beta)\min\limits_{i=1,\ldots,d}\frac{p_i}{w_i}}\sum\limits_{i=1}^{d}\frac{L_ip_i}{w_i^2}.
	\end{equation}
	Moreover, if we choose $\gamma = \frac{\gamma_0}{\sqrt{K}}$, then
	\begin{equation}\label{eq_sup:cor_non_cvx_is}
		\EE\left[\|\nabla f(\overline{z}^K)\|_1\right] \le \frac{1}{\sqrt{K}\min\limits_{i=1,\ldots,d}\frac{p_i}{w_i}}\left(\frac{(1-\beta)(f(x^0) - f(x^*))}{\gamma_0} + \frac{\gamma_0}{2(1-\beta)}\sum\limits_{i=1}^{d}\frac{L_ip_i}{w_i^2}\right).
	\end{equation}
	Note that if we choose $\gamma_0 = \sqrt{\frac{2(1-\beta)^2(f(x^0) - f(x^*))}{\sum\limits_{i=1}^d\frac{L_i p_i}{w_i^2}}}$ in order to minimize right-hand side of \eqref{eq_sup:cor_non_cvx_is}, we will get
\begin{equation}\label{eq_sup:cor_non_cvx_optimal_gamma_is}
		\EE\left[\|\nabla f(\overline{z}^K)\|_1\right] \le \frac{\sqrt{2\left(f(x^0)-f(x^*)\right)\sum\limits_{i=1}^d\frac{L_ip_i}{w_i^2}}}{\sqrt{K}\min\limits_{i=1,\ldots,d}\frac{p_i}{w_i}}.
	\end{equation}
	Note that for $p_i=\nicefrac{L_i}{\sum_i^d L_i}$ with $w_i = L_i$ we have that the rates improves to
	\begin{equation}
		\EE\left[\|\nabla f(\overline{z}^K)\|_1\right] \le \frac{\sqrt{2(f(x^0) - f(x^*)) d\sum_{i=1}^dL_i}}{\sqrt{K}}.
	\end{equation}
\end{theorem}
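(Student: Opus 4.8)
The plan is to mirror the proof of Theorem~\ref{thm_sup:non_cvx} for {\tt SMTP}, only replacing the generic Key Lemma by its importance-sampling counterpart, Lemma~\ref{lem_sup:key_lemma_is}. First I would substitute the prescribed stepsize $\gamma_i^k = \gamma/w_{i_k}$ into \eqref{eq_sup:key_lemma_is}, so that the two conditional expectations become explicit coordinate sums: $\EE[\gamma_i^k|\nabla_{i_k}f(z^k)| \mid z^k] = \gamma\sum_{i=1}^d \frac{p_i}{w_i}|\nabla_i f(z^k)|$ and $\EE[L_{i_k}(\gamma_i^k)^2 \mid z^k] = \gamma^2\sum_{i=1}^d \frac{L_i p_i}{w_i^2}$.

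The key (and essentially the only nontrivial) step is to lower bound the first sum by an $\ell_1$ norm of the gradient: since $\frac{p_i}{w_i} \ge \min_{j}\frac{p_j}{w_j}$ for every $i$, we have $\sum_{i}\frac{p_i}{w_i}|\nabla_i f(z^k)| \ge \bigl(\min_{j}\frac{p_j}{w_j}\bigr)\|\nabla f(z^k)\|_1$. Substituting this into \eqref{eq_sup:key_lemma_is}, rearranging, and taking full expectation via the tower property gives
\[
\EE\left[\|\nabla f(z^k)\|_1\right] \le \frac{(1-\beta)\,\EE[f(z^k)-f(z^{k+1})]}{\gamma\min_{j}\frac{p_j}{w_j}} + \frac{\gamma\sum_{i=1}^d\frac{L_ip_i}{w_i^2}}{2(1-\beta)\min_{j}\frac{p_j}{w_j}}.
\]
Summing over $k=0,\dots,K-1$, dividing by $K$, telescoping the right-hand side (which is at most $f(x^0)-f(x^*)$ since $z^0=x^0$ and $f(z^K)\ge f(x^*)$), and using that $\overline z^K$ is uniform over $\{z^0,\dots,z^{K-1}\}$ so $\frac1K\sum_k\EE\|\nabla f(z^k)\|_1 = \EE\|\nabla f(\overline z^K)\|_1$, yields \eqref{eq_sup:non_cvx_is}.

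The remaining statements are purely algebraic. Plugging $\gamma = \gamma_0/\sqrt K$ into \eqref{eq_sup:non_cvx_is} gives \eqref{eq_sup:cor_non_cvx_is}; the right-hand side there has the form $\frac{1}{\min_j p_j/w_j}\bigl(a/\gamma_0 + b\gamma_0\bigr)$ with $a=(1-\beta)(f(x^0)-f(x^*))$ and $b=\frac{1}{2(1-\beta)}\sum_i\frac{L_ip_i}{w_i^2}$, which over $\gamma_0>0$ is minimized at $\gamma_0 = \sqrt{a/b}$ with value $2\sqrt{ab}/\min_j(p_j/w_j)$, i.e. \eqref{eq_sup:cor_non_cvx_optimal_gamma_is}. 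Finally, for the importance-sampling choice $p_i = L_i/\sum_j L_j$ and $w_i = L_i$ we have $\frac{p_i}{w_i} = \frac{1}{\sum_j L_j}$ for all $i$ (so the minimum equals $\frac{1}{\sum_j L_j}$) and $\sum_i \frac{L_ip_i}{w_i^2} = \sum_i\frac{1}{\sum_j L_j} = \frac{d}{\sum_j L_j}$; inserting these two identities into \eqref{eq_sup:cor_non_cvx_optimal_gamma_is} collapses it to $\sqrt{2(f(x^0)-f(x^*))\,d\sum_i L_i}/\sqrt K$. I do not anticipate a genuine obstacle: the whole argument is a coordinate-wise specialization of the {\tt SMTP} non-convex analysis, and the only place that needs a small idea is the $\min_j(p_j/w_j)$ estimate converting the weighted $\ell_1$ sum into $\|\nabla f(z^k)\|_1$.
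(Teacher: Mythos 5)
Your proposal is correct and follows essentially the same route as the paper's own proof: specialize the key lemma \eqref{eq_sup:key_lemma_is} with $\gamma_i^k=\gamma/w_{i_k}$, lower bound the weighted gradient sum by $\min_j(p_j/w_j)\|\nabla f(z^k)\|_1$, telescope, and then perform the same algebraic specializations for $\gamma_0$ and for $p_i=L_i/\sum_j L_j$, $w_i=L_i$. No gaps.
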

\begin{proof}
	Recall that from \eqref{eq_sup:key_lemma_is} we have
	\begin{equation}\label{eq_sup:non_cvx_technical_is}
		\EE\left[f(z^{k+1})\mid z^k\right] \le  f(z^k) - \frac{1}{1-\beta}\EE\left[\gamma_i^k|\nabla_{i_k} f(z^k)|\mid z^k\right] + \frac{1}{2(1-\beta)^2}\EE\left[L_{i_k}(\gamma_i^k)^2\mid z^k\right].
	\end{equation}
	Using our choice $\gamma_i^k = \frac{\gamma}{w_{i_k}}$ we derive
	\begin{eqnarray*}
		\EE\left[\gamma_i^k|\nabla_{i_k} f(z^k)|\mid z^k\right] = \gamma\sum\limits_{i=1}^d \frac{p_i}{w_i}|\nabla_i f(z^k)|\ge \gamma\|\nabla f(z^k)\|_1 \min\limits_{i=1,\ldots,d}\frac{p_i}{w_i}
	\end{eqnarray*}
	and
	\begin{eqnarray*}
		\EE\left[L_{i_k}(\gamma_i^k)^2\mid z^k\right] = \gamma^2\sum\limits_{i=1}^d\frac{L_i p_i}{w_i^2}.
	\end{eqnarray*}
	Putting it in \eqref{eq_sup:non_cvx_technical_is} and taking full expectation from the both sides of obtained inequality we get
	\begin{eqnarray*}
		\EE\left[f(z^{k+1})\right] \le \EE\left[f(z^k)\right] - \frac{\gamma\min\limits_{i=1,\ldots,d}\frac{p_i}{w_i}}{1-\beta}\EE\|\nabla f(z^k)\|_1 + \frac{\gamma^2}{2(1-\beta)^2}\sum\limits_{i=1}^{d}\frac{L_ip_i}{w_i^2},
	\end{eqnarray*}
	whence
	\begin{eqnarray*}
		\|\nabla f(z^k)\|_1 \le \frac{(1-\beta)\left(\EE\left[f(z^k)\right] - \EE\left[f(z^{k+1})\right]\right)}{\gamma\min\limits_{i=1,\ldots,d}\frac{p_i}{w_i}} + \frac{\gamma}{2(1-\beta)\min\limits_{i=1,\ldots,d}\frac{p_i}{w_i}}\sum\limits_{i=1}^{d}\frac{L_ip_i}{w_i^2}.
	\end{eqnarray*}
	Summing up previous inequality for $k=0,1,\ldots,K-1$ and dividing both sides of the result by $K$, we get
	\begin{equation*}
		\frac{1}{K}\sum\limits_{k=0}^{K-1}\EE\left[\|\nabla f(z^k)\|_1\right] \le \frac{(1-\beta)(f(z^0) - f(x^*))}{K\gamma\min\limits_{i=1,\ldots,d}\frac{p_i}{w_i}} + \frac{\gamma}{2(1-\beta)\min\limits_{i=1,\ldots,d}\frac{p_i}{w_i}}\sum\limits_{i=1}^{d}\frac{L_ip_i}{w_i^2}.
	\end{equation*}
	It remains to notice that $\frac{1}{K}\sum\limits_{k=0}^{K-1}\EE\left[\|\nabla f(z^k)\|_1\right] = \EE\left[\|\nabla f(\overline{z}^K)\|_1\right]$.
	The last part where $\gamma = \frac{\gamma_0}{\sqrt{K}}$ is straightforward.
\end{proof}

	
\subsection{Convex Case}\label{sec:cvx_is} 
As for {\tt SMTP} to tackle convex problems by {\tt SMTP{\_}IS} we use Assumption~\ref{as:bounded_level_set} with $\|\cdot\|_\cD = \|\cdot\|_1$. Note that in this case $R_0 = \max\left\{\|x - x^*\|_\infty\mid f(x) \le f(x^0)\right\}$.

\begin{theorem}[Constant stepsize]\label{thm:cvx_constant_stepsize_is}
	Let Assumptions~\ref{as:bounded_level_set}~and~\ref{as_sup:coord_wise_L_smoothness} be satisfied. If we set $\gamma_i^k = \frac{\gamma}{w_{i_k}}$ such that $0<\gamma\le \frac{(1-\beta)R_0}{\min\limits_{i=1,\ldots,d}\frac{p_i}{w_i}}$, then for the iterates of {\tt SMTP{\_}IS} method the following inequality holds:
	\begin{equation}\label{eq_sup:cvx_constant_stepsize_is}
	 \EE\left[f(z^k)-f(x^*)\right] \le \left(1 - \frac{\gamma\min\limits_{i=1,\ldots,d}\frac{p_i}{w_i}}{(1-\beta)R_0}\right)^k\left(f(z^0)-f(x^*)\right) + \frac{\gamma R_0}{2(1-\beta)\min\limits_{i=1,\ldots,d}\frac{p_i}{w_i}}\sum\limits_{i=1}^d\frac{L_i p_i}{w_i^2}.
	\end{equation}
	Moreover, if we choose $\gamma = \frac{\varepsilon(1-\beta)\min\limits_{i=1,\ldots,d}\frac{p_i}{w_i}}{R_0\sum\limits_{i=1}^d \frac{L_ip_i}{w_i^2}}$ for some $0<\varepsilon\le \frac{R_0^2\sum\limits_{i=1}^d \frac{L_ip_i}{w_i^2}}{\min\limits_{i=1,\ldots,d}\frac{p_i^2}{w_i^2}}$ and run {\tt SMTP{\_}IS} for $k = K$ iterations where
	\begin{equation}\label{eq_sup:cvx_constant_stepsize_number_of_iterations_is}
		K = \frac{1}{\varepsilon}\frac{R_0^2\sum\limits_{i=1}^d\frac{L_ip_i}{w_i^2}}{\min\limits_{i=1,\ldots,d}\frac{p_i^2}{w_i^2}}\ln\left(\frac{2(f(x^0) - f(x^*))}{\varepsilon}\right),
	\end{equation}
	we will get $\EE\left[f(z^K)\right] - f(x^*) \le \varepsilon$. Moreover, for $p_i=\nicefrac{L_i}{\sum_i^d L_i}$ with $w_i = L_i$, the rate improves to
	\begin{equation}
		K = \frac{1}{\epsilon} R_0^2 d\sum_{i=1}^d L_i \ln\left(\frac{2(f(x^0) - f(x^*))}{\varepsilon}\right).
	\end{equation}
\end{theorem}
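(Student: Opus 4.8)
The plan is to mimic the proof of Theorem~\ref{thm_sup:cvx_constant_stepsize} (the constant-stepsize convex result for {\tt SMTP}), replacing the generic Key Lemma by its importance-sampling counterpart, Lemma~\ref{lem_sup:key_lemma_is}. First I would take the stepsize choice $\gamma_i^k = \gamma/w_{i_k}$ and evaluate the two conditional expectations on the right-hand side of \eqref{eq_sup:key_lemma_is}: $\EE[\gamma_i^k|\nabla_{i_k}f(z^k)| \mid z^k] = \gamma\sum_{i=1}^d (p_i/w_i)|\nabla_i f(z^k)| \ge \gamma\,(\min_i p_i/w_i)\,\|\nabla f(z^k)\|_1$, and $\EE[L_{i_k}(\gamma_i^k)^2 \mid z^k] = \gamma^2\sum_{i=1}^d L_i p_i/w_i^2$. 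Substituting these into \eqref{eq_sup:key_lemma_is} gives
\[
\EE[f(z^{k+1})\mid z^k] \le f(z^k) - \frac{\gamma\min_i(p_i/w_i)}{1-\beta}\|\nabla f(z^k)\|_1 + \frac{\gamma^2}{2(1-\beta)^2}\sum_{i=1}^d \frac{L_i p_i}{w_i^2}.
\]

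Next I would invoke the level-set assumption. By monotonicity \eqref{eq_sup:monotonicity_is} every iterate satisfies $f(z^k)\le f(z^0) = f(x^0)$, so the Hölder-based implication \eqref{eq:gradient_lower_bound} specialized to $\|\cdot\|_\cD = \|\cdot\|_1$ (hence $\|\cdot\|_\cD^* = \|\cdot\|_\infty$) yields $\|\nabla f(z^k)\|_1 \ge (f(z^k) - f(x^*))/R_0$. Plugging this in, taking full expectation, subtracting $f(x^*)$, and using the tower property gives the one-step contraction
\[
\EE[f(z^{k+1}) - f(x^*)] \le \Bigl(1 - \tfrac{\gamma\min_i(p_i/w_i)}{(1-\beta)R_0}\Bigr)\EE[f(z^k) - f(x^*)] + \frac{\gamma^2}{2(1-\beta)^2}\sum_{i=1}^d \frac{L_i p_i}{w_i^2}.
\]
The stepsize restriction $\gamma \le (1-\beta)R_0/\min_i(p_i/w_i)$ makes the factor nonnegative and at most $1$, so I would unroll the recursion and bound the geometric sum by $\sum_{l=0}^\infty$, which contributes $\frac{(1-\beta)R_0}{\gamma\min_i(p_i/w_i)}$; this produces exactly \eqref{eq_sup:cvx_constant_stepsize_is}.

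Finally, for the complexity statement I would substitute $\gamma = \varepsilon(1-\beta)\min_i(p_i/w_i)/(R_0\sum_i L_i p_i/w_i^2)$: the additive term then equals $\varepsilon/2$, and the contraction factor becomes $1 - \varepsilon\min_i(p_i^2/w_i^2)/(R_0^2\sum_i L_i p_i/w_i^2)$, using $(\min_i p_i/w_i)^2 = \min_i p_i^2/w_i^2$. Applying $1-x\le e^{-x}$ together with the value of $K$ in \eqref{eq_sup:cvx_constant_stepsize_number_of_iterations_is} drives the first term below $\varepsilon/2$, so $\EE[f(z^K)]-f(x^*)\le\varepsilon$. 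The special case $p_i = L_i/\sum_j L_j$, $w_i = L_i$ is then arithmetic: $p_i/w_i = 1/\sum_j L_j$ is constant in $i$, $L_i p_i/w_i^2 = 1/\sum_j L_j$, so $\sum_i L_i p_i/w_i^2 = d/\sum_j L_j$ and $\min_i p_i^2/w_i^2 = 1/(\sum_j L_j)^2$, collapsing $K$ to $\tfrac{1}{\varepsilon}R_0^2 d\sum_j L_j\ln(2(f(x^0)-f(x^*))/\varepsilon)$.

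I expect no deep obstacle: the argument is structurally identical to the {\tt SMTP} convex proof, and the momentum parameter $\beta$ enters only through the $(1-\beta)$ factors already handled in Lemma~\ref{lem_sup:key_lemma_is}. The points that require a little care are (i) the two weighted-expectation computations and the bound $\sum_i (p_i/w_i)|\nabla_i f| \ge (\min_i p_i/w_i)\|\nabla f\|_1$, and (ii) confirming that the bounded-level-set implication \eqref{eq:gradient_lower_bound} is available at every $z^k$, which is precisely what the monotonicity \eqref{eq_sup:monotonicity_is} guarantees; the remainder is the geometric-series bookkeeping and the final substitution of $\gamma$ and $K$.
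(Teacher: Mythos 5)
Your proposal is correct and follows essentially the same route as the paper's proof: the same two conditional-expectation computations under $\gamma_i^k=\gamma/w_{i_k}$, the same use of \eqref{eq:gradient_lower_bound} with $\|\cdot\|_\cD=\|\cdot\|_1$ via monotonicity, the same unrolled recurrence with the infinite geometric sum, and the same substitution of $\gamma$ and $K$ (including the correct arithmetic for the importance-sampling special case). No gaps.
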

\begin{proof}
	Recall that from \eqref{eq_sup:key_lemma_is} we have
	\begin{equation}\label{eq_sup:cvx_technical_is}
		\EE\left[f(z^{k+1})\mid z^k\right] \le  f(z^k) - \frac{1}{1-\beta}\EE\left[\gamma_i^k|\nabla_{i_k} f(z^k)|\mid z^k\right] + \frac{1}{2(1-\beta)^2}\EE\left[L_{i_k}(\gamma_i^k)^2\mid z^k\right].
	\end{equation}
	Using our choice $\gamma_i^k = \frac{\gamma}{w_{i_k}}$ we derive
	\begin{eqnarray*}
		\EE\left[\gamma_i^k\nabla_{i_k} f(z^k)\mid z^k\right] &=& \gamma\sum\limits_{i=1}^d \frac{p_i}{w_i}|\nabla_i f(z^k)|\ge \gamma\|\nabla f(z^k)\|_1 \min\limits_{i=1,\ldots,d}\frac{p_i}{w_i}\\
		&\overset{\eqref{eq:gradient_lower_bound}}{\ge}& \frac{\gamma}{R_0}\min\limits_{i=1,\ldots,d}\frac{p_i}{w_i}\left(f(z^k)-f(x^*)\right)
	\end{eqnarray*}
	and
	\begin{eqnarray*}
		\EE\left[L_{i_k}(\gamma_i^k)^2\mid z^k\right] = \gamma^2\sum\limits_{i=1}^d\frac{L_i p_i}{w_i^2}.
	\end{eqnarray*}
	Putting it in \eqref{eq_sup:cvx_technical_is} and taking full expectation from the both sides of obtained inequality we get
	\begin{equation}\label{eq_sup:cvx_recurrence_is}
		\EE\left[f(z^{k+1})-f(x^*)\right] \le \left(1 - \frac{\gamma\min\limits_{i=1,\ldots,d}\frac{p_i}{w_i}}{(1-\beta)R_0}\right)\EE\left[f(z^k)-f(x^*)\right] + \frac{\gamma^2}{2(1-\beta)^2}\sum\limits_{i=1}^d\frac{L_i p_i}{w_i^2}.
	\end{equation}
	Due to our choice of $\gamma \le \frac{(1-\beta)R_0}{\min\limits_{i=1,\ldots,d}\frac{p_i}{w_i}}$ we have that the factor $\left(1 - \frac{\gamma}{(1-\beta)R_0}\min\limits_{i=1,\ldots,d}\frac{p_i}{w_i}\right)$ is non-negative and, therefore,
	\begin{eqnarray*}
		\EE\left[f(z^{k})-f(x^*)\right] &\le& \left(1 - \frac{\gamma}{(1-\beta)R_0}\min\limits_{i=1,\ldots,d}\frac{p_i}{w_i}\right)^k\left(f(z^0)-f(x^*)\right)\\
		&&\quad + \left(\frac{\gamma^2}{2(1-\beta)^2}\sum\limits_{i=1}^d\frac{L_i p_i}{w_i^2}\right)\sum\limits_{l=0}^{k-1}\left(1 - \frac{\gamma}{(1-\beta)R_0}\min\limits_{i=1,\ldots,d}\frac{p_i}{w_i}\right)^l\\
		&\le& \left(1 - \frac{\gamma}{(1-\beta)R_0}\min\limits_{i=1,\ldots,d}\frac{p_i}{w_i}\right)^k\left(f(z^0)-f(x^*)\right)\\
		&&\quad + \left(\frac{\gamma^2}{2(1-\beta)^2}\sum\limits_{i=1}^d\frac{L_i p_i}{w_i^2}\right)\sum\limits_{l=0}^{\infty}\left(1 - \frac{\gamma}{(1-\beta)R_0}\min\limits_{i=1,\ldots,d}\frac{p_i}{w_i}\right)^l\\
		&\le& \left(1 - \frac{\gamma\min\limits_{i=1,\ldots,d}\frac{p_i}{w_i}}{(1-\beta)R_0}\right)^k\left(f(z^0)-f(x^*)\right) + \frac{\gamma R_0}{2(1-\beta)\min\limits_{i=1,\ldots,d}\frac{p_i}{w_i}}\sum\limits_{i=1}^d\frac{L_i p_i}{w_i^2}.
	\end{eqnarray*}	 
	Then, putting $\gamma = \frac{\varepsilon(1-\beta)\min\limits_{i=1,\ldots,d}\frac{p_i}{w_i}}{R_0\sum\limits_{i=1}^d \frac{L_ip_i}{w_i^2}}$ and $k=K$ from \eqref{eq_sup:cvx_constant_stepsize_number_of_iterations_is} in \eqref{eq_sup:cvx_constant_stepsize_is} we have
	\begin{eqnarray*}
		\EE[f(z^K)] - f(x^*) &=& \left(1 - \frac{\varepsilon\min\limits_{i=1,\ldots,d}\frac{p_i^2}{w_i^2}}{R_0^2\sum\limits_{i=1}^d\frac{L_ip_i}{w_i^2}}\right)^K\left(f(z^0)-f(x^*)\right) + \frac{\varepsilon}{2}\\
		&\le& \exp\left\{-K\cdot\frac{\varepsilon\min\limits_{i=1,\ldots,d}\frac{p_i^2}{w_i^2}}{R_0^2\sum\limits_{i=1}^d\frac{L_ip_i}{w_i^2}}\right\}\left(f(z^0)-f(x^*)\right) + \frac{\varepsilon}{2}\\
		&\overset{\eqref{eq_sup:cvx_constant_stepsize_number_of_iterations_is}}{=}& \frac{\varepsilon}{2} + \frac{\varepsilon}{2} = \varepsilon. 
	\end{eqnarray*}	 
\end{proof}

\begin{theorem}[Decreasing stepsizes]\label{thm:cvx_decreasing_stepsizes_is}
    Let Assumptions~\ref{as:bounded_level_set}~and~\ref{as_sup:coord_wise_L_smoothness} be satisfied. If we set $\gamma_i^k = \frac{\gamma^k}{w_{i_k}}$ and $\gamma^k = \frac{2}{\alpha k + \theta}$, where $\alpha = \frac{\min\limits_{i=1,\ldots,d}\frac{p_i}{w_i}}{(1-\beta)R_0}$ and $\theta \ge \frac{2}{\alpha}$, then for the iterates of {\tt SMTP{\_}IS} method the following inequality holds:
   	\begin{equation}\label{eq_sup:cvx_decreasing_stepsizes_is}
   		\EE\left[f(z^k)\right] - f(x^*) \le \frac{1}{\eta k+1}\max\left\{f(x^0)-f(x^*), \frac{2}{\alpha\theta(1-\beta)^2}\sum\limits_{i=1}^d\frac{L_ip_i}{w_i^2}\right\},	
   	\end{equation}
    where $\eta\eqdef \frac{\alpha}{\theta}$. Moreover, if we choose $\gamma^k = \frac{2\alpha}{\alpha^2k+2}$ where $\alpha = \frac{\min\limits_{i=1,\ldots,d}\frac{p_i}{w_i}}{(1-\beta)R_0}$ and run {\tt SMTP{\_}IS} for $k = K$ iterations where
	\begin{equation}\label{eq_sup:cvx_decreasing_stepsizes_number_of_iterations_is}
		K = \frac{1}{\varepsilon}\cdot\frac{2R_0^2}{\min\limits_{i=1,\ldots,d}\frac{p_i^2}{w_i^2}}\max\left\{(1-\beta)^2(f(x^0)-f(x^*)), \sum\limits_{i=1}^d\frac{L_ip_i}{w_i^2}\right\} - \frac{2(1-\beta)^2R_0^2}{\min\limits_{i=1,\ldots,d}\frac{p_i^2}{w_i^2}},\qquad \varepsilon > 0,
	\end{equation}
	 we will get $\EE\left[f(z^K)\right] - f(x^*) \le \varepsilon$.
\end{theorem}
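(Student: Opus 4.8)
The plan is to follow the proof of Theorem~\ref{thm_sup:cvx_decreasing_stepsizes} (the {\tt SMTP} decreasing-stepsize result) almost verbatim, substituting the importance-sampling one-step recurrence for the plain one and then invoking the technical Lemma~\ref{lem:decreasing_stepsizes}. First I would observe that the chain of estimates in the proof of Theorem~\ref{thm:cvx_constant_stepsize_is} leading to inequality~\eqref{eq_sup:cvx_recurrence_is} never used constancy of the stepsize: it only combined Lemma~\ref{lem_sup:key_lemma_is} with the choice $\gamma_i^k = \nicefrac{\gamma^k}{w_{i_k}}$, the lower bound $\EE[\gamma_i^k|\nabla_{i_k}f(z^k)|\mid z^k] \ge \frac{\gamma^k}{R_0}\min_i\frac{p_i}{w_i}\bigl(f(z^k)-f(x^*)\bigr)$ coming from \eqref{eq:gradient_lower_bound}, the identity $\EE[L_{i_k}(\gamma_i^k)^2\mid z^k] = (\gamma^k)^2\sum_{i=1}^d\frac{L_ip_i}{w_i^2}$, and the tower property. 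Hence for the stepsizes $\gamma_i^k = \nicefrac{\gamma^k}{w_{i_k}}$ one obtains
\[
	\EE\bigl[f(z^{k+1})-f(x^*)\bigr] \le (1 - \gamma^k\alpha)\,\EE\bigl[f(z^k)-f(x^*)\bigr] + (\gamma^k)^2 N,
\]
with $\alpha = \frac{\min_{i}(p_i/w_i)}{(1-\beta)R_0}$ and $N = \frac{1}{2(1-\beta)^2}\sum_{i=1}^d\frac{L_ip_i}{w_i^2}$, which is exactly the shape of recurrence required by Lemma~\ref{lem:decreasing_stepsizes}.

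Next I would apply Lemma~\ref{lem:decreasing_stepsizes} to the sequence $a^k \eqdef \EE[f(z^k)-f(x^*)]$ with the above $\alpha$ and $N$, with $\theta \ge 2/\alpha$, and with $C \eqdef \max\bigl\{f(x^0)-f(x^*),\ \frac{2}{\alpha\theta(1-\beta)^2}\sum_{i=1}^d\frac{L_ip_i}{w_i^2}\bigr\}$. The two hypotheses of the lemma are immediate: $a^0 = f(x^0)-f(x^*) \le C$ since $z^0 = x^0$ is deterministic, and $N \le \frac{\alpha\theta}{4}C$ because the second term in the maximum defining $C$ equals precisely $\frac{4N}{\alpha\theta}$. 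The lemma with $\gamma^k = \frac{2}{\alpha k+\theta}$ then gives $a^k \le \frac{C}{\eta k+1}$ with $\eta = \alpha/\theta$, which is \eqref{eq_sup:cvx_decreasing_stepsizes_is}.

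Finally, for the explicit iteration count I would specialize to $\theta = 2/\alpha$, i.e.\ $\gamma^k = \frac{2\alpha}{\alpha^2k+2}$. Then $\alpha\theta = 2$, so $C = \max\{f(x^0)-f(x^*),\ \frac{1}{(1-\beta)^2}\sum_i\frac{L_ip_i}{w_i^2}\}$ and $\eta = \frac{\alpha}{\theta} = \frac{\alpha^2}{2} = \frac{\min_i(p_i^2/w_i^2)}{2(1-\beta)^2R_0^2}$. Imposing $\frac{C}{\eta K+1}\le\varepsilon$, i.e.\ $K \ge \frac{C}{\varepsilon\eta} - \frac{1}{\eta}$, and substituting the values of $C$ and $\eta$ yields precisely \eqref{eq_sup:cvx_decreasing_stepsizes_number_of_iterations_is}. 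The only real work is bookkeeping: checking that the constant-stepsize argument reproduces the recurrence verbatim with $\gamma^k$ in place of $\gamma$, verifying the two hypotheses of Lemma~\ref{lem:decreasing_stepsizes}, and carefully propagating the constants through the substitution $\theta = 2/\alpha$; I do not anticipate any genuine obstacle beyond this.
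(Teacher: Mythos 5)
Your proposal is correct and follows essentially the same route as the paper's own proof: it reuses the recurrence \eqref{eq_sup:cvx_recurrence_is} (valid verbatim with a varying $\gamma^k$), applies Lemma~\ref{lem:decreasing_stepsizes} with the same constants $\alpha$, $N$, and $C$, and then specializes $\theta = \nicefrac{2}{\alpha}$ to read off the iteration count. Your explicit verification of the two hypotheses of the lemma is a small addition the paper leaves implicit, but the argument is otherwise identical.
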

\begin{proof}
	In \eqref{eq_sup:cvx_recurrence_is} we proved that
	\begin{equation*}
		\EE\left[f(z^{k+1})-f(x^*)\right] \le \left(1 - \frac{\gamma\min\limits_{i=1,\ldots,d}\frac{p_i}{w_i}}{(1-\beta)R_0}\right)\EE\left[f(z^k)-f(x^*)\right] + \frac{\gamma^2}{2(1-\beta)^2}\sum\limits_{l=1}^d\frac{L_ip_i}{w_i^2}.
	\end{equation*}
    Having that, we can apply Lemma~\ref{lem:decreasing_stepsizes} to the sequence $\EE\left[f(z^k)-f(x^*)\right]$. The constants for the lemma are: $N = \frac{1}{2(1-\beta)^2}\sum\limits_{l=1}^d\frac{L_ip_i}{w_i^2}$, $\alpha = \frac{\min\limits_{i=1,\ldots,d}\frac{p_i}{w_i}}{(1-\beta)R_0}$ and $C=\max\left\{f(x^0)-f(x^*), \frac{2}{\alpha\theta(1-\beta)^2}\sum\limits_{i=1}^d\frac{L_ip_i}{w_i^2}\right\}$.
    Lastly, note that choosing $\gamma^k = \frac{2\alpha}{\alpha^2k+2}$ is equivalent to choice $\theta = \frac{2}{\alpha}$. In this case we have $\alpha\theta = 2$ and $C = \max\left\{f(x^0) - f(x^*), \frac{1}{(1-\beta)^2}\sum\limits_{i=1}^d\frac{L_ip_i}{w_i^2}\right\}$ and $\eta = \frac{\alpha}{\theta} = \frac{\alpha^2}{2} = \frac{\min\limits_{i=1,\ldots,d}\frac{p_i^2}{w_i^2}}{2(1-\beta)^2R_0^2}$. Putting these parameters and $K$ from \eqref{eq_sup:cvx_decreasing_stepsizes_number_of_iterations_is} in the \eqref{eq_sup:cvx_decreasing_stepsizes_is} we get the result.
\end{proof}


\subsection{Strongly Convex Case}\label{sec_sup:str_cvx_is}

\begin{theorem}[Solution-dependent stepsizes]\label{thm_sup:str_cvx_sol_dep_stepsizes_is}
	Let Assumptions~\ref{as:strong_convexity} (with $\|\cdot\|_\cD = \|\cdot\|_1$) and~\ref{as_sup:coord_wise_L_smoothness} be satisfied. If we set $\gamma_i^k = \frac{(1-\beta)\theta_k\min\limits_{i=1,\ldots,d}\frac{p_i}{w_i}}{w_{i_k}\sum\limits_{i=1}^d\frac{L_ip_i}{w_i^2}}\sqrt{2\mu(f(z^k)-f(x^*))}$ for some $\theta_k\in(0,2)$ such that $\theta = \inf\limits_{k\ge 0}\{2\theta_k - \theta_k^2\} \in \left(0,\frac{\sum\limits_{i=1}^d\frac{L_ip_i}{w_i^2}}{\mu\min\limits_{i=1,\ldots,d}\frac{p_i^2}{w_i^2}}\right)$, then for the iterates of {\tt SMTP{\_}IS} method the following inequality holds:
	\begin{equation}\label{eq_sup:str_cvx_sol_dep_stepsizes_is}
		\EE\left[f(z^k)\right] - f(x^*) \le \left(1 - \frac{\theta\mu\min\limits_{i=1,\ldots,d}\frac{p_i^2}{w_i^2}}{\sum\limits_{i=1}^d\frac{L_ip_i}{w_i^2}}\right)^{k}\left(f(x^0) - f(x^*)\right).
	\end{equation}
	If we run {\tt SMTP{\_}IS} for $k = K$ iterations where
	\begin{equation}\label{eq_sup:str_cvx_sol_dep_stepsizes_number_of_iterations_is}
		K = \frac{\sum\limits_{i=1}^d\frac{L_ip_i}{w_i^2}}{\theta\mu\min\limits_{i=1,\ldots,d}\frac{p_i^2}{w_i^2}}\ln\left(\frac{f(x^0)-f(x^*)}{\varepsilon}\right),\qquad \varepsilon > 0,
	\end{equation}
	 we will get $\EE\left[f(z^K)\right] - f(x^*) \le \varepsilon$.
\end{theorem}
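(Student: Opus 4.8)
The plan is to mirror, almost line for line, the argument used for the {\tt SMTP} solution-dependent result (Theorem~\ref{thm_up:str_cvx_sol_dep_stepsizes}), with the single change that the driving inequality is the importance-sampling Key Lemma~\ref{lem_sup:key_lemma_is} rather than \eqref{eq_sup:key_lemma}. First I would fix the shorthand $S \eqdef \sum_{i=1}^d \frac{L_i p_i}{w_i^2}$, $m \eqdef \min_{i}\frac{p_i}{w_i}$, and $r^k \eqdef f(z^k)-f(x^*)$, noting that $m^2 = \min_i \frac{p_i^2}{w_i^2}$ because $t\mapsto t^2$ is increasing on $(0,\infty)$ and all $p_i,w_i$ are positive. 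The crucial observation is that the prescribed stepsize factors as $\gamma_i^k = \frac{c^k}{w_{i_k}}$ with $c^k \eqdef (1-\beta)\theta_k\frac{m}{S}\sqrt{2\mu r^k}$, where $c^k$ is measurable with respect to $z^k$ and hence independent of the random index $i_k$. This lets me evaluate the two conditional expectations in \eqref{eq_sup:key_lemma_is} exactly as in the proof of Theorem~\ref{thm:non_cvx_is}:
\[
\EE\!\left[\gamma_i^k|\nabla_{i_k}f(z^k)|\mid z^k\right] = c^k\sum_{i=1}^d\frac{p_i}{w_i}|\nabla_i f(z^k)| \ \ge\ c^k\, m\, \|\nabla f(z^k)\|_1, \qquad \EE\!\left[L_{i_k}(\gamma_i^k)^2\mid z^k\right] = (c^k)^2 S.
\]

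Next I would invoke strong convexity with $\|\cdot\|_\cD=\|\cdot\|_1$, which via \eqref{eq_sup:consequence_of_strong_convexity} gives $\|\nabla f(z^k)\|_1 \ge \sqrt{2\mu r^k}$. Substituting both conditional expectations and the explicit value of $c^k$ into \eqref{eq_sup:key_lemma_is}, the factor $(1-\beta)$ in the stepsize cancels the $\frac{1}{1-\beta}$ and $\frac{1}{(1-\beta)^2}$ prefactors, the two $\sqrt{2\mu r^k}$ factors pair up into $2\mu r^k$, and the $S$ in $\EE[L_{i_k}(\gamma_i^k)^2\mid z^k]$ cancels one power of $S$ in $(c^k)^2$; what remains is
\[
\EE\!\left[f(z^{k+1})\mid z^k\right]-f(x^*) \ \le\ r^k - \frac{2\theta_k\mu m^2}{S}\,r^k + \frac{\theta_k^2\mu m^2}{S}\,r^k \ =\ \left(1-(2\theta_k-\theta_k^2)\frac{\mu m^2}{S}\right)r^k.
\]
Using $2\theta_k-\theta_k^2 \ge \theta \eqdef \inf_{k\ge 0}\{2\theta_k-\theta_k^2\}$, together with the hypothesis $\theta < \frac{S}{\mu m^2}$ which guarantees the contraction factor lies in $[0,1)$, I take full expectation and unroll the recursion to obtain \eqref{eq_sup:str_cvx_sol_dep_stepsizes_is}. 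Finally I would bound $\bigl(1-\frac{\theta\mu m^2}{S}\bigr)^K \le \exp\!\bigl(-K\frac{\theta\mu m^2}{S}\bigr)$ and solve $\exp\!\bigl(-K\frac{\theta\mu m^2}{S}\bigr)(f(x^0)-f(x^*)) \le \varepsilon$ for $K$, which yields precisely the stated iteration count $K = \frac{S}{\theta\mu m^2}\ln\!\bigl(\frac{f(x^0)-f(x^*)}{\varepsilon}\bigr)$.

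The computation is essentially routine once the factorization $\gamma_i^k=c^k/w_{i_k}$ is spotted, so the only real obstacle is bookkeeping: one must check that the $\sqrt{2\mu r^k}$ factors and the powers of $S$, $m$, and $(1-\beta)$ combine exactly so that the per-step contraction rate is $(2\theta_k-\theta_k^2)\mu m^2/S$ and not some slightly different constant, and one must be careful that the minimum appearing in the rate is $\min_i p_i^2/w_i^2 = (\min_i p_i/w_i)^2$ rather than an independently chosen minimum. No other subtlety is expected.
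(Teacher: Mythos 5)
Your proposal is correct and follows essentially the same route as the paper's own proof: factor the stepsize as $\gamma_i^k = c^k/w_{i_k}$ with $c^k$ measurable w.r.t.\ $z^k$, evaluate the two conditional expectations in Lemma~\ref{lem_sup:key_lemma_is}, lower-bound $\sum_i \frac{p_i}{w_i}|\nabla_i f(z^k)|$ by $\min_i\frac{p_i}{w_i}\,\|\nabla f(z^k)\|_1$, invoke \eqref{eq_sup:consequence_of_strong_convexity} with $\|\cdot\|_\cD=\|\cdot\|_1$, and obtain the per-step contraction $\bigl(1-(2\theta_k-\theta_k^2)\mu m^2/S\bigr)$ before unrolling and applying the usual $(1-x)^K\le e^{-Kx}$ bound. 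The bookkeeping you flag (pairing of the $\sqrt{2\mu r^k}$ factors, cancellation of $(1-\beta)$ and one power of $S$) checks out exactly as in the paper.
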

\begin{proof}
	Recall that from \eqref{eq_sup:key_lemma_is} we have
	\begin{equation}\label{eq_sup:str_cvx_technical_is}
		\EE\left[f(z^{k+1})\mid z^k\right] \le  f(z^k) - \frac{1}{1-\beta}\EE\left[\gamma_i^k|\nabla_{i_k} f(z^k)|\mid z^k\right] + \frac{1}{2(1-\beta)^2}\EE\left[L_{i_k}(\gamma_i^k)^2\mid z^k\right].
	\end{equation}
	Using our choice $\gamma_i^k = \frac{(1-\beta)\theta_k\min\limits_{i=1,\ldots,d}\frac{p_i}{w_i}}{w_{i_k}\sum\limits_{i=1}^d\frac{L_ip_i}{w_i^2}}\sqrt{2\mu(f(z^k)-f(x^*))}$ we derive
	\begin{eqnarray*}
		\EE\left[\gamma_i^k\nabla_{i_k} f(z^k)\mid z^k\right] &=& \frac{(1-\beta)\theta_k\min\limits_{i=1,\ldots,d}\frac{p_i}{w_i}}{\sum\limits_{i=1}^d\frac{L_ip_i}{w_i^2}}\sqrt{2\mu(f(z^k)-f(x^*))}\sum\limits_{i=1}^d \frac{p_i}{w_i}|\nabla_i f(z^k)|\\
		&\ge& \frac{(1-\beta)\theta_k\left(\min\limits_{i=1,\ldots,d}\frac{p_i}{w_i}\right)^2}{\sum\limits_{i=1}^d\frac{L_ip_i}{w_i^2}}\sqrt{2\mu(f(z^k)-f(x^*))}\|\nabla f(z^k)\|_1\\
		&\overset{\eqref{eq:consequence_of_strong_convexity}}{\ge}& \frac{2(1-\beta)\theta_k\min\limits_{i=1,\ldots,d}\frac{p_i^2}{w_i^2}}{\sum\limits_{i=1}^d\frac{L_ip_i}{w_i^2}}\mu(f(z^k)-f(x^*))
	\end{eqnarray*}
	and
	\begin{eqnarray*}
		\EE\left[L_{i_k}(\gamma_i^k)^2\mid z^k\right] &=& \frac{2(1-\beta)^2\theta_k^2\min\limits_{i=1,\ldots,d}\frac{p_i^2}{w_i^2}}{\left(\sum\limits_{i=1}^d\frac{L_ip_i}{w_i^2}\right)^2}\mu(f(z^k)-f(x^*))\sum\limits_{i=1}^d\frac{L_i p_i}{w_i^2}\\
		&=& \frac{2(1-\beta)^2\theta_k^2\min\limits_{i=1,\ldots,d}\frac{p_i^2}{w_i^2}}{\sum\limits_{i=1}^d\frac{L_ip_i}{w_i^2}}\mu(f(z^k)-f(x^*)).
	\end{eqnarray*}
	Putting it in \eqref{eq_sup:str_cvx_technical_is} and taking full expectation from the both sides of obtained inequality we get
	\begin{eqnarray*}
		\EE\left[f(z^{k+1})-f(x^*)\right] \le \left(1 - (2\theta - \theta^2)\frac{\mu\min\limits_{i=1,\ldots,d}\frac{p_i^2}{w_i^2}}{\sum\limits_{i=1}^d\frac{L_ip_i}{w_i^2}}\right)\EE\left[f(z^k)-f(x^*)\right].
	\end{eqnarray*}		
	Using $\theta = \inf\limits_{k\ge 0}\{2\theta_k - \theta_k^2\} \in \left(0,\frac{\sum\limits_{i=1}^d\frac{L_ip_i}{w_i^2}}{\mu\min\limits_{i=1,\ldots,d}\frac{p_i^2}{w_i^2}}\right)$ we obtain
	\begin{eqnarray*}
		\EE\left[f(z^{k+1}) - f(x^*)\right] &\le& \left(1 - \frac{\theta\mu\min\limits_{i=1,\ldots,d}\frac{p_i^2}{w_i^2}}{\sum\limits_{i=1}^d\frac{L_ip_i}{w_i^2}}\right)\EE\left[f(z^k) - f(x^*)\right]\\
		&\le& \left(1 - \frac{\theta\mu\min\limits_{i=1,\ldots,d}\frac{p_i^2}{w_i^2}}{\sum\limits_{i=1}^d\frac{L_ip_i}{w_i^2}}\right)^{k+1}\left(f(x^0) - f(x^*)\right).
	\end{eqnarray*}
	Lasrtly, from \eqref{eq_sup:str_cvx_sol_dep_stepsizes_is} we have
	\begin{eqnarray*}
		\EE\left[f(z^K)\right] - f(x^*) &\le& \left(1 - \frac{\theta\mu\min\limits_{i=1,\ldots,d}\frac{p_i^2}{w_i^2}}{\sum\limits_{i=1}^d\frac{L_ip_i}{w_i^2}}\right)^{K}\left(f(x^0) - f(x^*)\right)\\
		&\le& \exp\left\{-K\frac{\theta\mu\min\limits_{i=1,\ldots,d}\frac{p_i^2}{w_i^2}}{\sum\limits_{i=1}^d\frac{L_ip_i}{w_i^2}}\right\}\left(f(x^0) - f(x^*)\right)\\
		&\overset{\eqref{eq_sup:str_cvx_sol_dep_stepsizes_number_of_iterations_is}}{\le}& \varepsilon.
	\end{eqnarray*}
\end{proof}

The previous result based on the choice of $\gamma^k$ which depends on the $f(z^k) - f(x^*)$ which is often unknown in practice. The next theorem does not have this drawback and makes it possible to obtain the same rate of convergence as in the previous theorem using one extra function evaluation.

\begin{theorem}[Solution-free stepsizes]\label{thm:str_cvx_sol_free_stepsizes_is}
	Let Assumptions~\ref{as:strong_convexity} (with $\|\cdot\|_\cD = \|\cdot\|_2$) and~\ref{as_sup:coord_wise_L_smoothness} be satisfied. If additionally we compute $f(z^k + te_{i_k})$, set $\gamma_i^k = \frac{(1-\beta)|f(z^k + te_{i_k}) - f(z^k)|}{L_{i_k} t}$ for $t>0$, then for the iterates of {\tt SMTP{\_}IS} method the following inequality holds:
	\begin{equation}\label{eq_sup:str_cvx_sol_free_stepsizes_is}
		\EE\left[f(z^k)\right] - f(x^*) \le \left(1 - \mu\min\limits_{i=1,\ldots,d}\frac{p_i}{L_i}\right)^k\left(f(x^0) - f(x^*)\right) + \frac{t^2}{8\mu\min\limits_{i=1,\ldots,d}\frac{p_i}{L_i}}\sum\limits_{i=1}^dp_iL_i.
	\end{equation}
	Moreover, for any $\varepsilon > 0$ if we set $t$ such that
	\begin{equation}\label{eq_sup:str_cvx_sol_free_stepsizes_t_bound_is}
		0 < t \le \sqrt{\frac{4\varepsilon\mu\min\limits_{l=1,\ldots,d}\frac{p_i}{L_i}}{\sum\limits_{i=1}^dp_iL_i}},
	\end{equation}		
	 and run {\tt SMTP{\_}IS} for $k = K$ iterations where
	\begin{equation}\label{eq_sup:str_cvx_sol_free_stepsizes_number_of_iterations_is}
		K = \frac{1}{\mu\min\limits_{i=1,\ldots,d}\frac{p_i}{L_i}}\ln\left(\frac{2(f(x^0)-f(x^*))}{\varepsilon}\right),
	\end{equation}
	 we will get $\EE\left[f(z^K)\right] - f(x^*) \le \varepsilon$.
	 Moreover, note that for $p_i=\nicefrac{L_i}{\sum_i^d L_i}$ with $w_i = L_i$, the rate improves to
	\begin{equation}
		K = \frac{\sum_{i=1}^d L_i}{\mu}\ln\left(\frac{2(f(x^0) - f(x^*))}{\varepsilon}\right).
	\end{equation}
\end{theorem}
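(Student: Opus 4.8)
The plan is to run the same argument as for the solution-free stepsize rule in the {\tt SMTP} case (Theorem~\ref{thm_sup:str_cvx_sol_free_stepsizes}), with $L$-smoothness replaced by coordinate-wise smoothness and with the norm $\|\cdot\|_\cD$ taken to be $\|\cdot\|_2$. First I would start from the deterministic inequality \eqref{eq_sup:key_lemma_without_expectation_is} and note that, for a fixed coordinate $i_k$, its right-hand side is a quadratic in $\gamma_i^k$ minimized at $\gamma_{\mathrm{opt}}^k \eqdef \frac{(1-\beta)|\nabla_{i_k}f(z^k)|}{L_{i_k}}$, whose value equals $f(z^k) - \frac{|\nabla_{i_k}f(z^k)|^2}{2L_{i_k}}$. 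Writing the chosen stepsize as $\gamma_i^k = \gamma_{\mathrm{opt}}^k + \delta^k$ with $\delta^k \eqdef \frac{1-\beta}{L_{i_k}t}\big(|f(z^k+te_{i_k})-f(z^k)| - |\langle\nabla f(z^k),te_{i_k}\rangle|\big)$ and completing the square in \eqref{eq_sup:key_lemma_without_expectation_is} yields
\[
	f(z^{k+1}) \le f(z^k) - \frac{|\nabla_{i_k}f(z^k)|^2}{2L_{i_k}} + \frac{L_{i_k}(\delta^k)^2}{2(1-\beta)^2}.
\]

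Next I would bound the finite-difference error $\delta^k$. By the reverse triangle inequality $|\delta^k| \le \frac{1-\beta}{L_{i_k}t}\,\big|f(z^k+te_{i_k})-f(z^k)-t\nabla_{i_k}f(z^k)\big|$; the quantity inside is at most $\frac{L_{i_k}t^2}{2}$ by \eqref{eq_sup:coord_wise_L_smoothness} applied along $e_{i_k}$, and it is at least $0$ because $f$ is convex (being $\mu$-strongly convex). Hence $|\delta^k| \le \frac{(1-\beta)t}{2}$, so $\frac{L_{i_k}(\delta^k)^2}{2(1-\beta)^2} \le \frac{L_{i_k}t^2}{8}$ and
\[
	f(z^{k+1}) \le f(z^k) - \frac{|\nabla_{i_k}f(z^k)|^2}{2L_{i_k}} + \frac{L_{i_k}t^2}{8}.
\]

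Then I would take the conditional expectation over $i_k\sim p$: the middle term becomes $-\tfrac12\sum_{i=1}^d p_i\frac{|\nabla_i f(z^k)|^2}{L_i} \le -\tfrac12\big(\min_{i}\tfrac{p_i}{L_i}\big)\|\nabla f(z^k)\|_2^2$ and the last becomes $\frac{t^2}{8}\sum_{i=1}^d p_iL_i$. Using \eqref{eq_sup:consequence_of_strong_convexity} with $\|\cdot\|_\cD=\|\cdot\|_2$, subtracting $f(x^*)$ and taking full expectation gives the recurrence
\[
	\EE\!\left[f(z^{k+1})-f(x^*)\right] \le \Big(1-\mu\min_{i}\tfrac{p_i}{L_i}\Big)\EE\!\left[f(z^k)-f(x^*)\right] + \frac{t^2}{8}\sum_{i=1}^d p_iL_i.
\]
The factor $1-\mu\min_i \tfrac{p_i}{L_i}$ lies in $[0,1)$ (from $\min_i \tfrac{p_i}{L_i}\le 1/\sum_i L_i$ and $\mu\le\sum_i L_i$), so unrolling as in the proof of Theorem~\ref{thm_sup:cvx_constant_stepsize} and summing the geometric series $\sum_{l\ge0}\big(1-\mu\min_i\tfrac{p_i}{L_i}\big)^l = \big(\mu\min_i\tfrac{p_i}{L_i}\big)^{-1}$ produces exactly \eqref{eq_sup:str_cvx_sol_free_stepsizes_is}. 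Plugging in \eqref{eq_sup:str_cvx_sol_free_stepsizes_t_bound_is} makes the additive term $\le \varepsilon/2$, and the bound $(1-c)^K\le e^{-cK}$ with $c=\mu\min_i\tfrac{p_i}{L_i}$ shows that $K$ as in \eqref{eq_sup:str_cvx_sol_free_stepsizes_number_of_iterations_is} brings the first term $\le\varepsilon/2$; for $p_i=L_i/\sum_j L_j$ and $w_i=L_i$ one has $\min_i\tfrac{p_i}{L_i}=1/\sum_j L_j$, which gives the claimed $K=\tfrac{\sum_i L_i}{\mu}\ln\tfrac{2(f(x^0)-f(x^*))}{\varepsilon}$.

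I expect the only genuinely delicate step to be the estimate on $\delta^k$: the coordinate-wise smoothness assumption \eqref{eq_sup:coord_wise_L_smoothness} is one-sided and by itself controls the finite-difference error only from above, and it is convexity of $f$ (rather than a two-sided coordinate smoothness bound) that supplies the matching lower bound; everything else is routine bookkeeping paralleling the {\tt SMTP} proofs. A secondary point worth verifying is that the contraction factor stays nonnegative, which — in contrast to the {\tt SMTP} statement, where $\mu_\cD^2\le L/\mu$ is imposed — holds automatically here.
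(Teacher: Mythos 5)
Your proposal is correct and follows essentially the same route as the paper's proof: the same decomposition $\gamma_i^k = \gamma_{\mathrm{opt}}^k + \delta_i^k$ of the finite-difference stepsize, the same bound $|\delta_i^k| \le \frac{(1-\beta)t}{2}$, and the same conditional-expectation, strong-convexity and geometric-series steps leading to \eqref{eq_sup:str_cvx_sol_free_stepsizes_is}. Your remark that the lower bound on the finite-difference error requires convexity (since Assumption~\ref{as_sup:coord_wise_L_smoothness} is stated as a one-sided inequality) is a small but valid refinement of the paper's argument, which simply invokes \eqref{eq_sup:coord_wise_L_smoothness} for the two-sided estimate.
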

\begin{proof}
	Recall that from \eqref{eq_sup:key_lemma_without_expectation_is} we have
	\begin{equation*}
		f(z^{k+1}) \le f(z^k) - \frac{\gamma_i^k}{1-\beta}|\nabla_{i_k} f(z^k)| + \frac{L_{i_k}(\gamma_i^k)^2}{2(1-\beta)^2}.
	\end{equation*}
	If we minimize the right hand side of the previous inequality as a function of $\gamma_i^k$, we will get that the optimal choice in this sense is $\gamma^k_{\text{opt}} = \frac{(1-\beta)|\nabla_{i_k} f(z^k)|}{L_{i_k}}$. However, this stepsize is impractical for derivative-free optimization, since it requires to know $\nabla_{i_k} f(z^k)$. The natural way to handle this is to approximate directional derivative $\nabla_{i_k} f(z^k)$ by finite difference $\frac{f(z^k+te_{i_k}) - f(z^k)}{t}$ and that is what we do. We choose $\gamma_i^k = \frac{(1-\beta)|f(z^k + te_{i_k}) - f(z^k)|}{L_{i_k} t} = \frac{(1-\beta)|\nabla_{i_k} f(z^k)|}{L_{i_k}} + \frac{(1-\beta)|f(z^k + te_{i_k}) - f(z^k)|}{L_{i_k} t} - \frac{(1-\beta)|\nabla_{i_k} f(z^k)|}{L_{i_k}} \eqdef \gamma^k_{\text{opt}} + \delta_i^k$. From this we get
	\begin{eqnarray*}
		f(z^{k+1}) &\le& f(z^k) - \frac{|\nabla_{i_k} f(z^k)|^2}{2L_{i_k}} + \frac{L_{i_k}}{2(1-\beta)^2}(\delta_i^k)^2.
	\end{eqnarray*}
	Next we estimate $|\delta_i^k|$:
	\begin{eqnarray*}
		|\delta_i^k| &=& \frac{(1-\beta)}{L_{i_k} t}\left||f(z^k + te_{i_k}) - f(z^k)| - |\nabla_{i_k} f(z^k)|t\right|\\
		&\le& \frac{(1-\beta)}{L_{i_k} t}\left|f(z^k + te_{i_k}) - f(z^k) - \nabla_{i_k} f(z^k)t\right|\\
		&\overset{\eqref{eq_sup:coord_wise_L_smoothness}}{\le}& \frac{(1-\beta)}{L_{i_k} t} \cdot\frac{L_{i_k}t^2}{2} = \frac{(1-\beta)t}{2}.
	\end{eqnarray*}
	It implies that
	\begin{eqnarray*}
		f(z^{k+1}) &\le& f(z^k) - \frac{|\nabla_{i_k} f(z^k)|^2}{2L_{i_k}} + \frac{L_{i_k}}{2(1-\beta)^2}\cdot\frac{(1-\beta)^2t^2}{4}\\
		&=& f(z^k) - \frac{|\nabla_{i_k} f(z^k)|^2}{2L_{i_k}}  + \frac{L_{i_k} t^2}{8}
	\end{eqnarray*}
	and after taking expectation $\EE\left[\cdot\mid z^k\right]$ conditioned on $z^k$ from the both sides of the obtained inequality we get
	\begin{equation*}
		\EE\left[f(z^{k+1})\mid z^k\right] \le f(z^k) - \frac{1}{2}\EE\left[\frac{|\nabla_{i_k} f(z^k)|^2}{L_{i_k}}\mid z^k\right] + \frac{t^2}{8}\EE\left[L_{i_k}\mid z^k\right].
	\end{equation*}
	Note that 
	\begin{eqnarray*}
		\EE\left[\frac{|\nabla_{i_k} f(z^k)|^2}{L_{i_k}}\mid z^k\right] &=&\sum\limits_{i=1}^d\frac{p_i}{L_i}|\nabla_i f(z^k)|^2\\
		&\ge& \|\nabla f(z^k)\|_2^2\min\limits_{i=1,\ldots,d}\frac{p_i}{L_i}\\
		&\overset{\eqref{eq_sup:consequence_of_strong_convexity}}{\ge}& 2\mu\left( f(z^k)-f(x^*)\right)\min\limits_{i=1,\ldots,d}\frac{p_i}{L_i},
	\end{eqnarray*}
	since $\|\cdot\|_{\cD} = \|\cdot\|_2$, and
	\begin{eqnarray*}
		\EE\left[L_{i_k}\mid z^k\right] = \sum\limits_{i=1}^dp_iL_i.
	\end{eqnarray*}
	Putting all together we get
	\begin{eqnarray*}
		\EE\left[f(z^{k+1})\mid z^k\right] \le f(z^k) - \mu\min\limits_{i=1,\ldots,d}\frac{p_i}{L_i}\left(f(z^k) - f(x^*)\right) + \frac{t^2}{8}\sum\limits_{i=1}^dp_iL_i.
	\end{eqnarray*}
	Taking full expectation from the previous inequality we get
	\begin{equation*}
		\EE\left[f(z^{k+1}) - f(x^*)\right] \le \left(1 - \mu\min\limits_{i=1,\ldots,d}\frac{p_i}{L_i}\right)\EE\left[f(z^k)-f(x^*)\right] + \frac{t^2}{8}\sum\limits_{i=1}^dp_iL_i.
	\end{equation*}
	Since $\mu \le L_i$ for all $i=1,\ldots,d$ we have
	\begin{eqnarray*}
		\EE\left[f(z^k)  - f(x^*)\right] &\le& \left(1 - \mu\min\limits_{i=1,\ldots,d}\frac{p_i}{L_i}\right)^k\left(f(x^0) - f(x^*)\right)\\
		&&\quad + \left(\frac{t^2}{8}\sum\limits_{i=1}^dp_iL_i\right)\sum\limits_{l=0}^{k-1} \left(1 - \mu\min\limits_{i=1,\ldots,d}\frac{p_i}{L_i}\right)^l\\
		&\le& \left(1 - \mu\min\limits_{i=1,\ldots,d}\frac{p_i}{L_i}\right)^k\left(f(x^0) - f(x^*)\right)\\
		&&\quad + \left(\frac{t^2}{8}\sum\limits_{i=1}^dp_iL_i\right)\sum\limits_{l=0}^{\infty} \left(1 - \mu\min\limits_{i=1,\ldots,d}\frac{p_i}{L_i}\right)^l\\
		&=& \left(1 - \mu\min\limits_{i=1,\ldots,d}\frac{p_i}{L_i}\right)^k\left(f(x^0) - f(x^*)\right) + \frac{t^2}{8\mu\min\limits_{i=1,\ldots,d}\frac{p_i}{L_i}}\sum\limits_{i=1}^dp_iL_i.
	\end{eqnarray*}
	Lastly, from \eqref{eq_sup:str_cvx_sol_free_stepsizes_is} we have
	\begin{eqnarray*}
		\EE\left[f(z^K)\right] - f(x^*) &\le& \left(1 - \mu\min\limits_{i=1,\ldots,d}\frac{p_i}{L_i}\right)^{K}\left(f(x^0) - f(x^*)\right) + \frac{t^2}{8\mu\min\limits_{i=1,\ldots,d}\frac{p_i}{L_i}}\sum\limits_{i=1}^dp_iL_i\\
		&\overset{\eqref{eq_sup:str_cvx_sol_free_stepsizes_t_bound_is}}{\le}& \exp\left\{-K\mu\min\limits_{i=1,\ldots,d}\frac{p_i}{L_i}\right\}\left(f(x^0) - f(x^*)\right) + \frac{\varepsilon}{2}\\
		&\overset{\eqref{eq_sup:str_cvx_sol_free_stepsizes_number_of_iterations_is}}{\le}& \frac{\varepsilon}{2} + \frac{\varepsilon}{2} = \varepsilon.
	\end{eqnarray*}
\end{proof}

\subsection{Comparison of {\tt SMTP} and {\tt SMTP{\_}IS}}\label{sec_sup:comparison_with_smtp}
Here we compare {\tt SMTP} when $\cD$ is normal distribution with zero mean and $\frac{I}{d}$ covariance matrix with {\tt SMTP{\_}IS} with probabilities $p_i = \nicefrac{L_i}{\sum_{i=1}^dL_i}$. We choose such a distribution for {\tt SMTP} since it shows the best dimension dependence among other distributions considered in Lemma~\ref{lem_sup:aux_lemma}. Note that if $f$ satisfies Assumption~\ref{as_sup:coord_wise_L_smoothness}, it is $L$-smooth with $L = \max\limits_{i=1,\ldots,d}L_i$. So, we always have that $\sum_{i=1}^dL_i \le dL$. Table~\ref{tab:comparison} summarizes complexities in this case.

\begin{table*}[t]
\centering
\small
\renewcommand{\arraystretch}{1.25}{
\begin{tabular}{c|c|c|c|c|c}
\toprule
Assumptions on $f$     & \begin{tabular}{@{}c@{}}{\tt SMTP}\\ Compleixty \end{tabular} & Theorem & \begin{tabular}{@{}c@{}}Importance\\ Sampling \end{tabular} & \begin{tabular}{@{}c@{}}{\tt SMTP{\_}IS}\\Complexity \end{tabular} & Theorem  \\ 
  \midrule
None &  $\frac{\pi r_0\textcolor{red}{dL}}{\epsilon^2}$ & \ref{thm:non_cvx} &$p_i = \frac{L_i}{\sum_{i=1}^d L_i}$ & $\frac{2 r_0 \textcolor{red}{ d\sum_{i=1}^d L_i}}{\epsilon^2}$ 
&   \ref{thm:non_cvx_is}\\

Convex, $R_0 < \infty$  & $\frac{\pi R_{0,\ell_2}^2 \textcolor{red}{dL}}{2\varepsilon}\ln\left(\frac{2r_0}{\varepsilon}\right)$  & \ref{thm:cvx_constant_stepsize} & $p_i = \frac{L_i}{\sum_{i=1}^d L_i}$ & $\frac{R_{0,\ell_\infty}^2 \textcolor{red} { d\sum_{i=1}^d L_i}}{\epsilon} \ln\left(\frac{2r_0}{\varepsilon}\right)$  &   \ref{thm:cvx_constant_stepsize_is}\\

$\mu$-strongly convex & $ \frac{\pi\textcolor{red}{dL}}{2\mu}\ln\left(\frac{2r_0}{\varepsilon}\right)$ & \ref{thm:str_cvx_sol_free_stepsizes} & $p_i = \frac{L_i}{\sum_{i=1}^d L_i}$ &  $\frac{\textcolor{red}{\sum_{i=1}^d L_i}}{\mu}\ln\left(\frac{2r_0}{\varepsilon}\right) $& \ref{thm:str_cvx_sol_free_stepsizes_is} \\
\bottomrule
\end{tabular}
}
\vspace{-3pt}
\caption{Comparison of {\tt SMTP} with $\cD = N\left(0,\frac{I}{d}\right)$ and {\tt SMTP{\_}IS} with $p_i = \nicefrac{L_i}{\sum_{i=1}^dL_i}$. Here $r_0 = f(x^0)-f(x^*)$, $R_{0,\ell_2}$ corresponds to the $R_0$ from Assumption~\ref{ass_sup:bounded_level_set} with $\|\cdot\|_{\cD} = \|\cdot\|_2$ and $R_{0,\ell_\infty}$ corresponds to the $R_0$ from Assumption~\ref{ass_sup:bounded_level_set} with $\|\cdot\|_{\cD} = \|\cdot\|_1$.}
\label{tab:comparison}
\end{table*}

We notice that for {\tt SMTP} we have $\|\cdot\|_{\cD} = \|\cdot\|_2$. That is why one needs to compare {\tt SMTP} with {\tt SMTP{\_}IS} accurately. At the first glance, Table~\ref{tab:comparison} says that for non-convex and convex cases we get an extra $d$ factor in the complexity of {\tt SMTP{\_}IS} when $L_1 = \ldots =L_d = L$. However, it is natural since we use different norms for {\tt SMTP} and {\tt SMTP{\_}IS}. In the non-convex case for {\tt SMTP} we give number of iterations in order to guarantee $\EE\left[\|\nabla f(\overline{z}^K)\|_2\right] \le \varepsilon$ while for {\tt SMTP{\_}IS} we provide number of iterations in order to guarantee $\EE\left[\|\nabla f(\overline{z}^K)\|_1\right] \le \varepsilon$. From {\bluetext Holder's} inequality $\|\cdot\|_1 \le \sqrt{d}\|\cdot\|_2$ and, therefore, in order to have $\EE\left[\|\nabla f(\overline{z}^K)\|_1\right] \le \varepsilon$ for {\tt SMTP} we need to ensure that $\EE\left[\|\nabla f(\overline{z}^K)\|_2\right] \le \frac{\varepsilon}{\sqrt{d}}$. That is, to guarantee $\EE\left[\|\nabla f(\overline{z}^K)\|_1\right] \le \varepsilon$ {\tt SMTP} for aforementioned distribution needs to perform $\frac{\pi r_0{d^2L}}{\epsilon^2}$ iterations.

Analogously, in the convex case using Cauchy-Schwartz inequality $\|\cdot\|_2 \le \sqrt{d}\|\cdot\|_\infty$ we have that $R_{0,\ell_2} \le \sqrt{d}R_{0,\ell_\infty}$. Typically this inequality is tight and if we assume that $R_{0,\ell_\infty} \ge C\frac{R_{0,\ell_2}}{\sqrt{d}}$, we will get that {\tt SMTP{\_}IS} complexity is $\frac{R_{0,\ell_2}^2 { \sum_{i=1}^d L_i}}{\epsilon} \ln\left(\frac{2r_0}{\varepsilon}\right)$ up to constant factor.

That is, in all cases {\tt SMTP{\_}IS} shows better complexity than {\tt SMTP} up to some constant factor.

\section{Auxiliary results}\label{sec_sup:aux_res}

\begin{lem}[Lemma~3.4 from \cite{Bergou_2018}]\label{lem_sup:aux_lemma}
	Let $g\in \R^d$.  
\begin{enumerate}
\item If ${\cal D}$ is the uniform distribution on the unit sphere in $\R^d$, then
\begin{equation}
\gamma_{\cal D} = 1 \quad  \text{and} \quad \Exp_{s \sim {\cal D}}\;  | \<g, s> | \sim \frac{1}{\sqrt{2\pi d}} \|g\|_2.
\end{equation}
Hence, ${\cal D}$ satisfies Assumption~\ref{ass:stp_general_assumption}  with $\gamma_{\cal D}=1$, $\|\cdot\|_{\cal D}=\|\cdot\|_2$ and $\mu_{\cal D} \sim \frac{1}{\sqrt{2\pi d}}$.

\item 

If ${\cal D}$ is the normal distribution with zero mean and identity over ${d}$ as covariance matrix (i.e.\ $s\sim N(0,\frac{I}{d})$)
then
\begin{equation}
\gamma_{\cal D} = 1 \quad \text{and} \quad \Exp_{s \sim {\cal D}}\;  | \<g, s> | = \frac{\sqrt{2}}{  \sqrt{d\pi}}\|g\|_2.
\end{equation}
Hence, ${\cal D}$ satisfies Assumption~\ref{ass:stp_general_assumption}  with $\gamma_{\cal D}=1$, $\|\cdot\|_{\cal D}=\|\cdot\|_2$ and $\mu_{\cal D}= \frac{\sqrt{2}}{  \sqrt{d\pi}}$.

\item If ${\cal D}$ is the uniform distribution on  $\{e_1,\dots,e_d\}$, then
\begin{equation}
\gamma_{\cal D} = 1 \quad \text{and} \quad \Exp_{s\sim {\cal D}}\;  | \<g, s> |  = \frac{1}{d} \|g\|_1.
\end{equation}
Hence, ${\cal D}$ satisfies Assumption~\ref{ass:stp_general_assumption}  with $\gamma_{\cal D}=1$, $\|\cdot\|_{\cal D}=\|\cdot\|_1$ and $\mu_{\cal D}=\tfrac{1}{d}$.
\item If ${\cal D}$ is an arbitrary distribution on  $\{e_1,\dots,e_d\}$ given by $\PP\left\{s=e_i\right\}=p_i  >0$, then
\begin{equation}
\gamma_{\cal D} = 1 \quad \text{and} \quad \Exp_{s\sim {\cal D}}\;  | \<g, s> |  = \|g\|_{\cal D} \eqdef \sum_{i=1}^{\bluetext d} p_i |g_i|.
\end{equation}
Hence, ${\cal D}$ satisfies Assumption~\ref{ass:stp_general_assumption}  with $\gamma_{\cal D}=1$  
 and $\mu_{\cal D}=1$.
\item If ${\cal D}$ is a distribution on $D= \{u_1,\ldots,u_d\}$ where $u_1,\ldots,u_d$ form an orthonormal basis of $\R^d$ and 
$\PP\left\{s = d_i\right\} = p_i$, then 
\begin{equation}
\gamma_{\cal D} = 1 \quad \text{and} \quad \Exp_{s\sim {\cal D}}\;  | \<g, s> |  = \|g\|_{\cal D} \eqdef \sum_{i=1}^d p_i |g_i|.
\end{equation}
Hence, ${\cal D}$ satisfies Assumption~\ref{ass:stp_general_assumption}  with $\gamma_{\cal D}=1$  
 and $\mu_{\cal D}=1$.

\end{enumerate}	 
\end{lem}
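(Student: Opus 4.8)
For each of the five distributions the claim has two components: the value of $\gamma_\cD = \EE_{s\sim\cD}\|s\|_2^2$, and the identity $\EE_{s\sim\cD}|\langle g,s\rangle| = \mu_\cD\|g\|_\cD$ with the stated norm and constant (from which Assumption~\ref{ass:stp_general_assumption} follows). The claim $\gamma_\cD=1$ is immediate in four of the five cases: for the uniform distribution on the sphere and for any distribution supported on a set of unit vectors (the coordinate vectors $e_i$ in cases 3 and 4, an orthonormal basis $u_i$ in case 5) every realization satisfies $\|s\|_2=1$; for the Gaussian case $s\sim N(0,I/d)$ one computes $\EE\|s\|_2^2 = \sum_{i=1}^d \EE s_i^2 = d\cdot\tfrac1d = 1$. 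So the substance is the evaluation of $\EE_{s\sim\cD}|\langle g,s\rangle|$.

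I would handle the discrete cases first, since they are one-line computations. For case 4, linearity of expectation gives $\EE_{s\sim\cD}|\langle g,s\rangle| = \sum_{i=1}^d p_i|\langle g,e_i\rangle| = \sum_{i=1}^d p_i|g_i|$; as all $p_i>0$, the functional $g\mapsto\sum_i p_i|g_i|$ is a genuine norm on $\R^d$, which we name $\|\cdot\|_\cD$, so $\mu_\cD=1$. Case 3 is the special case $p_i\equiv\tfrac1d$, yielding $\EE|\langle g,s\rangle| = \tfrac1d\|g\|_1$, i.e.\ $\|\cdot\|_\cD=\|\cdot\|_1$ and $\mu_\cD=\tfrac1d$. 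Case 5 reduces to case 4 via the orthogonal change of coordinates $g = \sum_i\langle g,u_i\rangle u_i$: the numbers $\langle g,u_i\rangle$ are the coordinates of $g$ in the orthonormal basis $\{u_i\}$, so $\EE|\langle g,s\rangle| = \sum_i p_i|\langle g,u_i\rangle| =: \|g\|_\cD$, again a norm, and $\mu_\cD=1$. For the Gaussian case 2, observe $\langle g,s\rangle = \sum_i g_i s_i$ is a linear combination of independent $N(0,\tfrac1d)$ variables, hence centered Gaussian with variance $\sigma^2 = \|g\|_2^2/d$; the standard identity $\EE|X| = \sigma\sqrt{2/\pi}$ for $X\sim N(0,\sigma^2)$ then gives $\EE|\langle g,s\rangle| = \sqrt{2/(\pi d)}\,\|g\|_2$, so $\|\cdot\|_\cD = \|\cdot\|_2$ and $\mu_\cD = \sqrt{2}/\sqrt{d\pi}$.

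The remaining and hardest case is case 1, the uniform distribution on $S^{d-1}$. Here I would first invoke rotational invariance: the law of $s$ is unchanged by orthogonal transformations, so $\EE|\langle g,s\rangle|$ depends on $g$ only through $\|g\|_2$, and we may assume $g = \|g\|_2 e_1$, reducing the problem to computing $\|g\|_2\,\EE|s_1|$ where $s_1$ is the first coordinate of a uniform point on the sphere. The marginal density of $s_1$ on $[-1,1]$ is $c_d(1-t^2)^{(d-3)/2}$ with $c_d = \Gamma(d/2)/(\sqrt{\pi}\,\Gamma((d-1)/2))$ (normalization via the substitution $t=\sin\phi$ and a Beta integral). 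The substitution $u=t^2$ evaluates $\int_{-1}^1|t|(1-t^2)^{(d-3)/2}\,dt = \tfrac{2}{d-1}$, and simplifying with $\Gamma((d+1)/2) = \tfrac{d-1}{2}\Gamma((d-1)/2)$ yields the exact value $\EE|s_1| = \Gamma(d/2)/(\sqrt{\pi}\,\Gamma((d+1)/2))$. Finally, the Stirling ratio estimate $\Gamma(d/2)/\Gamma((d+1)/2)\sim\sqrt{2/d}$ shows $\EE|\langle g,s\rangle| = \Theta(\|g\|_2/\sqrt{d})$, matching the displayed asymptotic, so $\|\cdot\|_\cD = \|\cdot\|_2$ and $\mu_\cD$ of order $1/\sqrt{d}$.

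I expect the main obstacle to be entirely in case 1: getting the marginal density of $s_1$ correct (the $(d-3)/2$ exponent and its normalization) and then pinning down the precise constant in the asymptotic for $\mu_\cD$ via Stirling — in particular, confirming whether the displayed ``$\sim$'' is meant as a leading-order asymptotic or as a uniform lower bound valid for all $d$, since the exact value $\Gamma(d/2)/(\sqrt{\pi}\,\Gamma((d+1)/2))$ and its limit $\sqrt{2/(\pi d)}$ should be reconciled with the stated $\tfrac{1}{\sqrt{2\pi d}}$. All the other cases are routine.
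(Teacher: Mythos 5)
The paper itself offers no proof of this lemma: it is imported verbatim from \cite{Bergou_2018} and stated ``for convenience,'' so there is no in-paper argument to compare against. Your proof is correct and complete in all five cases, and the route you take (direct computation of $\gamma_\cD$ from unit-norm support or from $\EE s_i^2 = 1/d$; linearity of expectation in the discrete cases; the Gaussian identity $\EE|X|=\sigma\sqrt{2/\pi}$; rotational invariance plus the marginal density of one spherical coordinate) is the standard one. On the question you raise at the end about case 1: your exact value $\EE|s_1| = \Gamma(d/2)/\bigl(\sqrt{\pi}\,\Gamma((d+1)/2)\bigr)$ is right, and its large-$d$ asymptotic is indeed $\sqrt{2/(\pi d)}$, which is \emph{twice} the displayed constant $1/\sqrt{2\pi d}$. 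This is not a gap in your argument but a looseness in the quoted statement: Assumption~\ref{ass:stp_general_assumption} only requires the one-sided bound $\EE_{s\sim\cD}|\langle g,s\rangle|\ge \mu_\cD\|g\|_\cD$, and by Wendel's inequality $\Gamma(d/2)/\Gamma((d+1)/2)\ge\sqrt{2/d}$ one has $\EE|s_1|\ge\sqrt{2/(\pi d)}>1/\sqrt{2\pi d}$ for every $d$, so the stated $\mu_\cD$ is a valid (merely non-tight) choice uniformly in $d$, not just in the limit. It would be worth adding one sentence to that effect so the ``$\sim$'' is read as ``a constant of this order for which the inequality holds'' rather than as an exact asymptotic.
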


\end{document}